\documentclass[a4paper,12pt]{article} 
\usepackage{graphicx}
\usepackage[english]{babel} 
\usepackage{amsmath,amssymb,amsthm,amsfonts} 
\usepackage{latexsym}
\usepackage[all]{xy}
\usepackage{url}
\usepackage{mathrsfs}

\newcommand{\Q}{\mathbb Q}

\newcommand{\Z}{\mathbb Z}
\newcommand{\C}{\mathbb C}

\newcommand{\p}{\mathfrak p}

\newcommand{\gal}{\mathrm{Gal}}
\renewcommand{\epsilon}{\varepsilon}

\newcommand{\rt}{\mathrm{R}_t}
\newcommand{\cl}{\mathrm{Cl}}
\newcommand{\perm}{\mathrm{Perm}}
\newcommand{\aut}{\mathrm{Aut}\ }
\newcommand{\ind}{\mathrm{ind}}
\newcommand{\res}{\mathrm{res}}
\renewcommand{\hom}{\mathrm{Hom}}
\newcommand{\map}{\mathrm{Map}}
\newcommand{\oG}{\overline{G}}
\newcommand{\oL}{\overline{\Lambda}}

\newcommand{\oo}{\mathcal O}
\newcommand{\W}{\mathcal W}
\newcommand{\ab}{\mathrm{ab}}
\newcommand{\gl}{\mathrm{GL}}
\newcommand{\h}{\mathscr{H}}
\newcommand{\Det}{\mathrm{Det}}

\newtheorem{coroll}{Corollary}[equation]
\newtheorem{prop}[equation]{Proposition}
\newtheorem*{prop7}{Proposition 7$\,'$}

\theoremstyle{remark}
\newtheorem*{remark}{Remark}

\theoremstyle{definition}
\newtheorem*{defn}{Definition}

\textwidth=430pt
\hoffset=-20pt
\voffset=-30pt
\textheight=700pt
\topmargin=-20pt

\title{From Galois module classes to Steinitz classes}
\author{Leon R. McCulloh}
\date{}
\begin{document}
\maketitle

\emph{This article is a transcription of some handwritten notes of an informal report given by Leon R. McCulloh in Oberwolfach in February 2002.}

\begin{abstract}
Let $G$ be a finite group of order $n$ and exponent $e$. Corresponding to a tame Galois extension $L/K$ of number fields with an isomorphism $\gal(L/K)\cong G$, one has associated the Galois module class $\cl_{\oo_KG}(\oo_L)$ (resp., the Steinitz class $\cl_{\oo_K}(\oo_L)$) in the class group $\cl(\oo_KG)$ (resp., $\cl(\oo_K )$). For fixed number field $K$, the set of Galois module (resp., Steinitz) classes realized in $\cl(\oo_KG)$ (resp., $\cl(\oo_K)$) as $L/K$ ranges over all tame Galois extensions with isomorphism $\gal(L/K)\cong G$ is denoted by $R(\oo_KG)$ (resp., $R_t(\oo_K,G)$). We show 
\textbf{Theorem} a) 
\[R_t(\oo_K,G)\subseteq\prod_{m|e}\prod_{\substack{s\in G\\|s|=m}}N(K(\bar s)/K)^{\frac{n}{m}\frac{m-1}{2}}\]
unless the Sylow $2$-subgroups of $G$ are non­trivially cyclic, in which case\\
b) 
\[R_t(\oo_K,G)^2\subseteq\prod_{m|e}\prod_{\substack{s\in G\\|s|=m}}N(K(\bar s)/K)^{\frac{n}{m}(m-1)}\]
where $N(K(\bar s)/K)=N_{K(\bar s)/K}(\cl(\oo_{K(\bar s)}))\subseteq \cl(\oo_K)$. 
Further explaining, $\bar G$ is the set of conjugacy classes of $G$ endowed with a ``cyclotomic'' action of $\Omega\ (=\gal(K^c/K))$ via $\kappa:\Omega\to\gal(K(\zeta_e)/K)\hookrightarrow(\Z/e\Z)^\times$ - explicitly, for $s\in G$ and corresponding $\bar s\in\bar G$ and $\omega\in\Omega$, $s^\omega=s^{\kappa^{-1}(\omega)}$ and $\bar s^\omega=\overline{s^\omega}$, where $\zeta_e^\omega=\zeta_e^{\kappa(\omega)}$. Finally, $K(\bar s)=(K^c)^{\Omega_{\bar s}}$, where $\Omega_{\bar s}$ is the $\Omega$-stabilizer of $\bar s$.

The result is obtained from a known inclusion of $R(\oo_KG)$ via $\res^G_1:\cl(\oo_KG)\to\cl(\oo_K)$. 
\end{abstract}

Let $G$ be a finite group, $K\subseteq \C$ an algebraic number field and $\Omega=\Omega_K=\gal(K^c/K)$, where $K^c\subseteq \C$ is the algebraic closure of $K$.
\begin{defn}
If $w:\Omega\to\aut G$ is a homomorphism, we denote by $G_w$ the $\Omega$-group $G$ with $\Omega$-action given by $s^\omega=s^{w(\omega)}$ for $s\in G_w$, $\omega\in\Omega$.
\end{defn}
Let $R_G$ be the virtual character ring of $G$. Then $\Omega$ acts on $R_G$ naturally by
\[\chi^\omega(s)=\chi(s)^\omega\qquad \text{for }s\in G,\omega\in\Omega,\]
where $\chi$ is an absolutely irreducible character of $G$. Let $\Q R_G=\Q\otimes_\Z R_G$.
\begin{defn}
We define a $\Q$-pairing $\langle\ ,\ \rangle:\Q R_G\times\Q G\to \Q$ as follows.
\begin{enumerate}
\item[a)] If $\chi$ is a character of degree $1$ and $s\in G$, then $\langle\chi,s\rangle$ is the rational number defined by $\chi(s)=e^{2\pi i\langle\chi,s\rangle}$, where $0\leq \langle\chi,s\rangle<1$.

(Note: If $G$ is abelian, then a) defines $\langle\ ,\ \rangle:\Q R_G\times\Q G\to \Q$ by $\Q$-bilinear extension.)
\item[b)] If $\chi$ is any character of $G$, then $\langle\chi,s\rangle$ is defined by
\[\langle\chi,s\rangle=\langle\res^G_{\langle s\rangle}\chi,s\rangle.\]
(Note: $\Q$-bilinearity extends this to the required pairing.)
\end{enumerate}
\end{defn}

\begin{defn}
We define the Stickelberger map $\Theta_G:\Q R_G\to \Q G$ by
\[\Theta_G(\chi)=\sum_{s\in G}\langle\chi,s\rangle s\quad\text{for }\chi\in\Q R_G.\]
\end{defn}

\begin{defn}
The Stickelberger module $S_G\subseteq \Z G$ is given by
\[S_G=\Theta_G(R_G)\cap \Z G.\]
\end{defn}

\begin{defn}
If $\chi$ is a character of $G$, then $\det \chi$ is a character of $G$ of degree $1$ (or a character of $G^\ab$), where
\[(\det\chi)(s)=\det(T_\chi(s)),\]
where $T_\chi:G\to\gl(\C)$ is any matrix representation affording $\chi$. This extends to a $\Z$-homomorphism
\[\det:R_G\to\widehat{G^\ab}\quad \text{(the character group of $G^\ab$)}.\]
We let $A_G=\ker(\det)$, so we have the exact sequence
\[0\to A_G\to R_G\stackrel{\det}\to \widehat{G^\ab}\to 1.\]
\end{defn}

\begin{prop}\label{thetazg}
If $\chi\in R_G$, then $\Theta_G(\chi)\in\Z G$ if and only if $\chi\in A_G$. In particular $\Theta_G(A_G)=S_G$.
\end{prop}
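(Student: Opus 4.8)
The plan is to unwind the coefficient $\langle\chi,s\rangle$ of $s$ in $\Theta_G(\chi)$ one group element at a time and to recognise its integrality as the triviality of $(\det\chi)(s)$. First I would reduce to cyclic subgroups: by part b) of the definition of the pairing $\langle\chi,s\rangle=\langle\res^G_{\langle s\rangle}\chi,s\rangle$, and since $\langle s\rangle$ is abelian this value is computed by $\Z$-linear extension of the degree-$1$ rule. Put $m=|s|$ and write $\res^G_{\langle s\rangle}\chi=\sum_i n_i\psi_i$ with $n_i\in\Z$ (possibly negative, since $\chi$ is only a virtual character) and $\psi_i$ the linear characters of $\langle s\rangle$; for each $i$ let $a_i\in\{0,1,\dots,m-1\}$ be the integer with $\psi_i(s)=e^{2\pi i a_i/m}$. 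Then $\langle\psi_i,s\rangle=a_i/m$, whence $\langle\chi,s\rangle=\frac1m\sum_i n_i a_i$.

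The key step is the identity $(\det\chi)(s)=e^{2\pi i(\sum_i n_i a_i)/m}$. Since $\det\colon R_G\to\widehat{G^\ab}$ is a $\Z$-homomorphism, is compatible with restriction (this is clear for genuine characters and hence for all of $R_G$), and satisfies $\det\psi=\psi$ for linear $\psi$, restricting $\det\chi$ to $\langle s\rangle$ gives $(\det\chi)(s)=\det(\res^G_{\langle s\rangle}\chi)(s)=\prod_i\psi_i(s)^{n_i}=e^{2\pi i(\sum_i n_i a_i)/m}$. Comparing this with $\langle\chi,s\rangle=\frac1m\sum_i n_i a_i$, we see that $\langle\chi,s\rangle\in\Z$ if and only if $m\mid\sum_i n_i a_i$, that is, if and only if $(\det\chi)(s)=1$.

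The proposition follows at once. One has $\Theta_G(\chi)\in\Z G$ iff $\langle\chi,s\rangle\in\Z$ for every $s\in G$, iff $(\det\chi)(s)=1$ for every $s\in G$; since the projection $G\to G^\ab$ is surjective this is exactly the assertion that $\det\chi$ is the trivial character, i.e.\ $\chi\in\ker(\det)=A_G$. For the final sentence, $A_G\subseteq R_G$ together with $\Theta_G(A_G)\subseteq\Z G$ gives $\Theta_G(A_G)\subseteq\Theta_G(R_G)\cap\Z G=S_G$; conversely, if $x\in S_G$, write $x=\Theta_G(\chi)$ with $\chi\in R_G$, so that $\Theta_G(\chi)\in\Z G$ forces $\chi\in A_G$ by the first part, and hence $x\in\Theta_G(A_G)$.

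I do not expect a genuine obstacle here: all the content is in the key-step identity, which rests only on the multiplicativity of $\det$ under direct sums and its compatibility with restriction to $\langle s\rangle$. The sole point requiring mild care is that $\chi$ ranges over the virtual character ring, so the decomposition on $\langle s\rangle$ must be read with integer multiplicities and one should invoke $\Z$-linearity of both $\langle\,\cdot\,,s\rangle$ and $\det$ rather than manipulating honest representations.
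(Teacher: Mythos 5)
Your proof is correct and follows essentially the same route as the paper's: both reduce to the cyclic subgroup $\langle s\rangle$, decompose $\res^G_{\langle s\rangle}\chi$ into linear characters, and identify $e^{2\pi i\langle\chi,s\rangle}$ with $(\det\chi)(s)$. The only differences are cosmetic refinements on your part --- explicitly allowing negative integer multiplicities since $\chi$ is a virtual character, and spelling out the deduction of $\Theta_G(A_G)=S_G$, which the paper leaves as ``in particular.''
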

\begin{proof}
Let $\chi\in R_G$. Then $\Theta_G(\chi)=\sum_{s\in G}\langle\chi,s\rangle s\in \Z G$ if and only if $\langle\chi,s\rangle=\langle\res^G_{\langle s\rangle}\chi,s\rangle \in \Z$ for all $s\in G$. For all $s\in G$ the representation $\res^G_{\langle s\rangle}\chi$ has a decomposition in representations of dimension $1$, which correspond to characters $\psi_{s,1},\dots,\psi_{s,n}$, where $n$ is a positive integer. Hence
\[\begin{split}(\det\chi)(s)&=(\det \res^G_{\langle s\rangle}\chi)(s)=\prod_{j=1}^n\psi_{s,j}(s)=\prod_{j=1}^n e^{2\pi i\langle\psi_{s,j},s\rangle}\\&=e^{2\pi i\sum_{j=1}^n\langle\psi_{s,j},s\rangle}=e^{2\pi i\langle\res ^G_{\langle s\rangle}\chi,s\rangle}.\end{split}\]
Therefore $\langle\res^G_{\langle s\rangle}\chi,s\rangle \in \Z$ for all $s\in G$ if and only if $\chi\in\ker(\det)=A_G$.
\end{proof}

\begin{prop}\label{thetareg}
Let $\rho_G$ be the character of the regular representation of $G$. Let $s\in G$, where $s$ has order $m$ ($|s|=m$) and $\# G=n$. Then
$$\langle\rho_G,s\rangle=\frac{n}{m}\frac{m-1}{2}.$$
\end{prop}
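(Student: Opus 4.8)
The plan is to reduce the computation to the cyclic subgroup $\langle s\rangle$ and then evaluate an explicit arithmetic sum. First I would apply part b) of the definition of $\langle\ ,\ \rangle$ to write
\[\langle\rho_G,s\rangle=\langle\res^G_{\langle s\rangle}\rho_G,s\rangle,\]
so that the whole computation takes place inside $\langle s\rangle$, a cyclic group of order $m$. Next I would invoke the standard fact that the restriction to a subgroup $H\leq G$ of the regular representation of $G$ is isomorphic to $[G:H]$ copies of the regular representation of $H$ (indeed $\C G\cong(\C H)^{[G:H]}$ as $\C H$-modules). Taking $H=\langle s\rangle$, of index $n/m$, gives $\res^G_{\langle s\rangle}\rho_G=\frac{n}{m}\rho_{\langle s\rangle}$, and hence, by $\Q$-bilinearity of the pairing, $\langle\rho_G,s\rangle=\frac{n}{m}\langle\rho_{\langle s\rangle},s\rangle$.

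It then remains to compute $\langle\rho_{\langle s\rangle},s\rangle$. Here I would decompose $\rho_{\langle s\rangle}$ into the $m$ characters of degree $1$ of the cyclic group $\langle s\rangle=\{1,s,\dots,s^{m-1}\}$: these are the characters $\psi_j$, $0\leq j\leq m-1$, determined by $\psi_j(s)=e^{2\pi i j/m}$. Since $0\leq j/m<1$, part a) of the definition of the pairing gives $\langle\psi_j,s\rangle=j/m$ directly, so
\[\langle\rho_{\langle s\rangle},s\rangle=\sum_{j=0}^{m-1}\langle\psi_j,s\rangle=\sum_{j=0}^{m-1}\frac{j}{m}=\frac{1}{m}\cdot\frac{m(m-1)}{2}=\frac{m-1}{2}.\]
Combining this with the previous paragraph yields $\langle\rho_G,s\rangle=\frac{n}{m}\frac{m-1}{2}$, as claimed.

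I do not expect a genuine obstacle here: the argument is elementary once the reduction to $\langle s\rangle$ is in place. The only two points deserving a little care are (i) justifying $\res^G_{\langle s\rangle}\rho_G=\frac{n}{m}\rho_{\langle s\rangle}$ cleanly at the level of characters (rather than modules), which is immediate since $\rho_G(t)=0$ for $t\neq 1$ forces the restricted character to be supported at $1$ with value $n=\frac{n}{m}\cdot m$; and (ii) making sure the degree-$1$ constituents are listed with the exponents $j\in\{0,1,\dots,m-1\}$ that land in the fundamental domain $[0,1)$, so that part a) of the definition applies verbatim and no reduction modulo $\Z$ is needed. Everything else is the identity $\sum_{j=0}^{m-1}j=m(m-1)/2$.
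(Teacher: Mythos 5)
Your proposal is correct and follows essentially the same route as the paper: reduce via part b) of the definition to $\langle s\rangle$, use $\res^G_{\langle s\rangle}\rho_G=[G:\langle s\rangle]\rho_{\langle s\rangle}$, decompose into the $m$ degree-one characters of $\langle s\rangle$, and evaluate $\sum_{j=0}^{m-1}\frac{j}{m}=\frac{m-1}{2}$. The extra remarks on why the restriction identity holds and why part a) applies verbatim are sound but not needed beyond what the paper already records.
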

\begin{proof}
We compute
$$\langle\rho_G,s\rangle=\langle\res_{\langle s\rangle}^G\rho_G,s\rangle=\langle[G:\langle s\rangle]\rho_{\langle s\rangle},s\rangle=\frac{n}{m}\langle\rho_{\langle s\rangle},s\rangle.$$
Now, the characters of $\langle s\rangle$ are $\{\phi^j : j=0,\ldots,m-1\}$ where $\phi^j(s)=e^{2\pi i\frac{j}{m}}$, and $\langle\phi^j,s\rangle=\frac{j}{m}$; so
$$
\langle\rho_{\langle s\rangle},s\rangle=\sum_{j=0}^{m-1}\langle\phi^j,s\rangle=\sum_{j=0}^{m-1}\frac{j}{m}=\frac{1}{m}\left(\frac{(m-1)m}{2}\right)=\frac{m-1}{2}.
$$
Hence $\langle\rho_G,s\rangle=\frac{n}{m}\frac{m-1}{2}$.
\end{proof}

\begin{coroll}\label{coroll2.1}
Let $e$ be the exponent of $G$. 
$$\Theta_G(\rho_G)=\frac{n}{e}\sum_{m\mid e}\frac{e}{m}\frac{m-1}{2}\sum_{\substack{s\in G\\ |s|=m}}s=\sum_{m\mid e}\frac{n}{m}\frac{m-1}{2}\sum_{\substack{s\in G\\ |s|=m}}s.$$
\end{coroll}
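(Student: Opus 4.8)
The plan is to apply the definition of the Stickelberger map directly and then reorganise the sum over $G$ according to the orders of its elements. By the definition of $\Theta_G$ we have $\Theta_G(\rho_G)=\sum_{s\in G}\langle\rho_G,s\rangle s$, so the only genuine input needed is the value of $\langle\rho_G,s\rangle$ for each $s$, and this is exactly what Proposition~\ref{thetareg} supplies: $\langle\rho_G,s\rangle=\frac{n}{m}\frac{m-1}{2}$ whenever $|s|=m$.

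First I would observe that every $s\in G$ has order $|s|=m$ dividing the exponent $e$ of $G$, so the index set $G$ decomposes as the disjoint union $\bigsqcup_{m\mid e}\{\,s\in G:|s|=m\,\}$. Grouping the terms of $\sum_{s\in G}\langle\rho_G,s\rangle s$ along this partition and substituting the value from Proposition~\ref{thetareg} — which depends only on $m=|s|$ and is therefore constant on each block — gives
\[\Theta_G(\rho_G)=\sum_{m\mid e}\frac{n}{m}\frac{m-1}{2}\sum_{\substack{s\in G\\ |s|=m}}s,\]
which is the second displayed expression. The first displayed expression then follows by the elementary rewriting $\frac{n}{m}=\frac{n}{e}\cdot\frac{e}{m}$, pulling the factor $\frac{n}{e}$ (independent of $m$) outside the sum over divisors $m\mid e$.

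Since every step is a formal manipulation of finite sums together with a single application of Proposition~\ref{thetareg}, there is no substantial obstacle here; the only point deserving a moment's care is the assertion that the order of each element of $G$ divides the exponent $e$, which is immediate from the definition of the exponent, so that the index $m$ ranges exactly over the divisors of $e$ (with empty blocks contributing nothing). The corollary is thus essentially a bookkeeping restatement of Proposition~\ref{thetareg}.
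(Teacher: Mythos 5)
Your argument is correct and is exactly the intended one: the paper's own proof is just ``Clear,'' since the corollary follows immediately from the definition of $\Theta_G$ together with Proposition~\ref{thetareg}. Your write-up simply makes explicit the partition of $G$ by element order and the trivial rewriting $\frac{n}{m}=\frac{n}{e}\cdot\frac{e}{m}$, which is all that is needed.
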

\begin{proof}
Clear.
\end{proof}

\begin{prop}
$\rho_G\in A_G\ (=\ker(\det))$ unless $n=\# G$ is even and the Sylow $2$-subgroups of $G$ are cyclic (and conversely).
\end{prop}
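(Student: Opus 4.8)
The plan is to compute the character $\det\rho_G\in\widehat{G^\ab}$ explicitly on each element of $G$ and then read off exactly when it is the trivial character. The key input is the identity established inside the proof of Proposition~\ref{thetazg}: for every $\chi\in R_G$ and $s\in G$ one has $(\det\chi)(s)=e^{2\pi i\langle\res^G_{\langle s\rangle}\chi,s\rangle}=e^{2\pi i\langle\chi,s\rangle}$. Applying this to $\chi=\rho_G$ and invoking Proposition~\ref{thetareg} yields, for $s\in G$ of order $m=|s|$,
\[(\det\rho_G)(s)=e^{2\pi i\langle\rho_G,s\rangle}=e^{2\pi i\cdot\frac{n}{m}\cdot\frac{m-1}{2}}=e^{\pi i\,\frac{n}{m}(m-1)},\]
where $\frac{n}{m}(m-1)$ is an integer because $m\mid n$. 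Hence $(\det\rho_G)(s)=1$ precisely when $\frac{n}{m}(m-1)$ is even, and $\rho_G\in A_G=\ker(\det)$ if and only if this holds for every $s\in G$. (Alternatively one could identify $(\det\rho_G)(s)$ with the sign of the left-translation permutation $x\mapsto sx$, a product of $n/m$ cycles of length $m$, arriving at the same sign $(-1)^{\frac{n}{m}(m-1)}$; but the computation above is immediate from what precedes.)

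Next I would carry out the elementary parity analysis. If $m=|s|$ is odd, then $m-1$ is even, so $\frac{n}{m}(m-1)$ is automatically even. If $m$ is even, then $m-1$ is odd, so $\frac{n}{m}(m-1)$ is even if and only if $n/m$ is even. Consequently $\rho_G\notin A_G$ if and only if there exists $s\in G$ of even order $m$ with $n/m$ odd.

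Finally I would translate this arithmetic condition into the statement about Sylow $2$-subgroups. Write $2^a$ for the exact power of $2$ dividing $n$. If such an $s$ exists, then $n/m$ odd forces $2^a\mid m$, so $s^{m/2^a}$ has order exactly $2^a$ and generates a cyclic group of order $2^a=|P|$ for a Sylow $2$-subgroup $P$; this cyclic group is therefore itself a Sylow $2$-subgroup, and being non-trivial it forces $n$ even and all Sylow $2$-subgroups cyclic. Conversely, if $n$ is even and a Sylow $2$-subgroup $P$ is cyclic, then a generator of $P$ has even order $m=2^a$ with $n/m$ odd, so by the previous paragraph $\rho_G\notin A_G$. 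Combining the two directions gives the proposition. The only point requiring a little care is this last equivalence — namely the observation that a cyclic subgroup of order $2^a$ is a Sylow $2$-subgroup — while everything else is a direct computation.
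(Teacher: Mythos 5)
Your proof is correct, but it reaches $(\det\rho_G)(s)=(-1)^{\frac{n}{m}(m-1)}$ by a different route than the paper's own proof. The paper argues directly that the regular representation is a permutation representation, so $(\det\rho_G)(s)$ is the sign of left translation by $s$, which decomposes into $\frac{n}{m}$ cycles of length $m$; you instead reuse the identity $(\det\chi)(s)=e^{2\pi i\langle\chi,s\rangle}$ established in the proof of Proposition~\ref{thetazg} together with the value $\langle\rho_G,s\rangle=\frac{n}{m}\frac{m-1}{2}$ from Proposition~\ref{thetareg}. Both computations yield the same parity criterion (nontriviality exactly when some $s$ has $m=|s|$ even and $\frac{n}{m}$ odd), and the subsequent translation into the Sylow condition coincides with the paper's; indeed your route is essentially the one the paper itself follows in the ``Alternate proof'' of the corollary immediately after this proposition, pulled back to the proposition via Proposition~\ref{thetazg}. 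What your version buys is economy, since it recycles results already proved rather than introducing a new computation; what the paper's direct argument buys is independence from Proposition~\ref{thetareg} and a transparent interpretation of $\det\rho_G$ as the sign character of the regular permutation action. Your treatment of the final group-theoretic step (extracting $s^{m/2^a}$ of order $2^a$ and noting that a cyclic subgroup of order equal to the $2$-part of $n$ is a Sylow $2$-subgroup) is slightly more detailed than the paper's one-line remark, but it is the same observation.
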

\begin{proof}
The regular representation of $G$ is given by permutation matrices corresponding to the left regular permutation representation of $G$ on $G$. Their determinants are $+1$ or $-1$ according as the corresponding permutations are even or odd. So
$$(\det \rho_G)(s)=\left\{\begin{array}{cc}+1&\textrm{if multiplication by $s$ is an even permutation of $G$}\\-1&\textrm{if multiplication by $s$ is an odd permutation of $G$.}\end{array}\right.$$ 
If $s$ has order $m$, then the permutation of $G$ given by multiplication by $s$ decomposes into $\frac{n}{m}=[G:\langle s\rangle]$ cycles of length $m=|s|$. So for $s$ to give an odd permutation, $m$ must be even and $\frac{n}{m}$ odd. This happens only if $n$ is even and $\langle s\rangle$ contains a Sylow $2$-subgroup of $G$, that is the Sylow $2$-subgroups of $G$ are cyclic. (Conversely if $\langle s\rangle$ is the Sylow $2$-subgroup of $G$, $s\ne 1$, then $s$ gives an odd permutation of $G$). 
\end{proof}

\noindent
\underline{Note}: In any case $2\rho_G\in A_G$.

\begin{coroll}
$\Theta_G(\rho_G)\in \Z[G]$ unless $G$ has even order and the $2$-Sylow subgroups are cyclic (and conversely).
\end{coroll}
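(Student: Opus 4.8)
The plan is to deduce the statement immediately from the two preceding Propositions, with essentially no further work. First I would invoke Proposition~\ref{thetazg}, which asserts that for $\chi\in R_G$ the element $\Theta_G(\chi)$ lies in $\Z G$ if and only if $\chi\in A_G=\ker(\det)$. Specializing $\chi$ to the regular character $\rho_G$, this reduces the question of integrality of $\Theta_G(\rho_G)$ to the membership $\rho_G\in A_G$. Then I would quote the Proposition just above, according to which $\rho_G\in A_G$ holds unless $\#G$ is even and the Sylow $2$-subgroups of $G$ are cyclic, and conversely. Chaining these two equivalences yields precisely the asserted dichotomy, so the proof is a one-line deduction and there is no genuine obstacle to overcome.

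The only thing worth doing beyond this is to record the complementary observation already flagged in the Note: because $2\rho_G\in A_G$ unconditionally, one always has $\Theta_G(2\rho_G)=2\,\Theta_G(\rho_G)\in\Z G$, so $\Theta_G(\rho_G)$ is at worst a half-integral element of $\Q G$. Together with the explicit expansion of $\Theta_G(\rho_G)$ from Corollary~\ref{coroll2.1}, this is what will later force the passage from part~a) to part~b) of the main Theorem in the exceptional cyclic-Sylow-$2$ case.
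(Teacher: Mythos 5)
Your proposal is correct and coincides with the paper's own (first) proof, which simply cites Proposition~\ref{thetazg} together with the preceding proposition on when $\rho_G\in A_G$. (The paper also records an alternate proof reading the parity condition off the explicit expansion in Corollary~\ref{coroll2.1}, but your route is the one the paper takes as primary.)
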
 
\begin{proof}
Clear from Proposition \ref{thetazg}.
\end{proof}
\begin{proof}[Alternate proof]
By Corollary \ref{coroll2.1}, $\Theta_G(\rho_G)\in \Z[G]$ unless $G$ has an element $s$ of order $m$, with $\frac{n}{m}(m-1)$ odd. That is $m=|s|$ is even and $\frac{n}{m}=[G:\langle s\rangle]$ is odd. As before, this happens if and only if $n=\# G$ is even and the Sylow $2$-subgroups of $G$ are cylic. 
\end{proof}

\noindent
\underline{Note}: As before, $\Theta_G(2\rho_G)\in\Z[G]$ in all cases.

\begin{defn}
Let $e$ be the exponent of $G$ and $\mu_e$ the group of $e^{\textrm{th}}$ roots of $1$. Let $\kappa:\Omega_K\to (\Z/e\Z)^\times$ be the homomorphism defined via the cyclotomic character
$$\Omega_K\twoheadrightarrow \gal(K(\mu_e)/K)\hookrightarrow (\Z/e\Z)^\times.$$
That is, if $\zeta\in\mu_e$, $\zeta^\omega=\zeta^{\kappa(\omega)}$.
\end{defn}

For each $r\in\Z$ we can define an ``action'' of $\Omega$ on $G$ by permutations by
$$s^\omega=s^{\kappa^r(\omega)}\quad\textrm{for each $s\in G$, $\omega\in \Omega$.}$$
Then (by abuse of notation) we denote by $G_{\kappa^r}$ the $\Omega$-set $G$ with the action given by $\kappa^r:\Omega\to (\Z/e\Z)^\times \to \perm(G)$. Then $\Z G_{\kappa^r}$ is an $\Omega$-module.

\begin{prop}
If $\chi\in R_G$ and $s\in G$, then 
$$\langle\chi^{\omega},s\rangle=\langle\chi, s^{\kappa(\omega)}\rangle.$$
It follows that $\Theta_G:\Q R_G\to \Q G_{\kappa^{-1}}$ is an $\Omega$-homomorphism. That is, $\Theta_G(\chi^\omega)=\Theta_G(\chi)^\omega$ ($\omega\in \Omega$ acting on $G$ via $\kappa^{-1}$). (Note: $\kappa^{-1}(\omega)=\kappa(\omega)^{-1}$ in $(\Z/e\Z)^\times$).
\end{prop}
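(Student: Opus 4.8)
The plan is to reduce the statement to the degree-one case, where the pairing is defined directly, and then to extend by $\Q$-bilinearity. First I would verify that $\Omega$ acts on $\Q R_G$ by ring automorphisms and on $\Q G$ by the permutation action $s\mapsto s^{\kappa(\omega)}$ (equivalently, the $\kappa$-action, which is inverse to the $\kappa^{-1}$-action appearing in the statement); so it suffices to prove $\langle\chi^\omega,s\rangle=\langle\chi,s^{\kappa(\omega)}\rangle$ for $\chi$ ranging over a $\Q$-spanning set of $\Q R_G$. Since every virtual character is a $\Z$-combination of characters induced from degree-one characters of subgroups, and since the pairing satisfies $\langle\chi,s\rangle=\langle\res^G_{\langle s\rangle}\chi,s\rangle$ by definition b), the computation localizes to the cyclic group $\langle s\rangle$: I would check that the operations $\chi\mapsto\chi^\omega$ and $\chi\mapsto\res^G_{\langle s\rangle}\chi$ commute (both are defined characterwise on values), so the claim for $G$ follows from the claim for the abelian group $\langle s\rangle$.

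Next I would dispatch the abelian case. If $\chi$ has degree $1$, then by definition a) we have $\chi(s)=e^{2\pi i\langle\chi,s\rangle}$ with $0\le\langle\chi,s\rangle<1$, and likewise $\chi^\omega(s)=\chi(s)^\omega=e^{2\pi i\langle\chi^\omega,s\rangle}$. Writing $c=\langle\chi,s\rangle$, the root of unity $\chi(s)$ lies in $\mu_e$, so $\chi(s)^\omega=\chi(s)^{\kappa(\omega)}=e^{2\pi i\kappa(\omega)c}$ by the defining property of $\kappa$; on the other hand $\chi(s^{\kappa(\omega)})=\chi(s)^{\kappa(\omega)}$ as well, since $\chi$ is a homomorphism (here $\kappa(\omega)$ is interpreted as an integer representative, which is well-defined modulo $e$ because $\chi(s)$ has order dividing $e$). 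Hence $e^{2\pi i\langle\chi^\omega,s\rangle}=e^{2\pi i\langle\chi,s^{\kappa(\omega)}\rangle}$, and since both fractional parts lie in $[0,1)$ they are equal. Then $\Q$-bilinear extension gives the identity for all $\chi\in\Q R_{\langle s\rangle}$, completing the abelian case and, by the reduction above, the general case.

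Finally I would translate the pairing identity into the statement about $\Theta_G$. By definition $\Theta_G(\chi^\omega)=\sum_{s\in G}\langle\chi^\omega,s\rangle\,s=\sum_{s\in G}\langle\chi,s^{\kappa(\omega)}\rangle\,s$; re-indexing the sum via the bijection $s\mapsto s^{\kappa(\omega)^{-1}}=s^{\kappa^{-1}(\omega)}$ of $G$ turns this into $\sum_{t\in G}\langle\chi,t\rangle\,t^{\kappa^{-1}(\omega)}=\Theta_G(\chi)^\omega$, where the last equality is exactly the definition of the $\Omega$-action on $\Q G_{\kappa^{-1}}$. This is an $\Omega$-equivariant $\Q$-linear map, as claimed.

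The only real subtlety — and the step I expect to need the most care — is the interplay between the integer-representative ambiguity of $\kappa(\omega)\in(\Z/e\Z)^\times$ and the fractional part normalization $0\le\langle\chi,s\rangle<1$: one must check that $s^{\kappa(\omega)}$ and $\chi(s)^{\kappa(\omega)}$ are genuinely well-defined (they are, since $|s|\mid e$ and $\chi(s)\in\mu_e$), and that passing to representatives does not disturb the normalization when one finally reads off $\langle\chi^\omega,s\rangle$ as the fractional part. Everything else is bookkeeping with restriction, induction, and $\Q$-bilinearity.
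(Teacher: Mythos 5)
Your proposal is correct and follows essentially the same route as the paper: prove the identity for degree-one characters via $\chi^\omega(s)=\chi(s)^{\kappa(\omega)}=\chi(s^{\kappa(\omega)})$ together with the normalization $0\le\langle\chi,s\rangle<1$, extend by $\Q$-bilinearity (the paper leaves the reduction to $\langle s\rangle$ via definition b) implicit, which you spell out; your aside about Brauer induction is unnecessary but harmless since you never use it), and then reindex the sum defining $\Theta_G$ to get $\Omega$-equivariance.
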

\begin{proof}
If $\chi$ is of degree $1$, then $\chi(s)\in \mu_e$ (if $s\in G$). So $\chi^\omega(s)=\left(\chi(s)\right)^\omega=\chi(s)^{\kappa(\omega)}=\chi(s^{\kappa(\omega)})$, so 
$$e^{2\pi i\langle\chi^{\omega},s\rangle}=e^{2\pi i\langle\chi,s^{\kappa(\omega)}\rangle}\quad \textrm{i.e. $\langle\chi^\omega,s\rangle=\langle\chi,s^{\kappa(\omega)}\rangle$}.$$ 
Hence by $\Q$-bilinearity, $\langle\chi^\omega,s\rangle=\langle\chi,s^{\kappa(\omega)}\rangle$ for all $\chi\in \Q R_G$, $s\in G$. Thus if $\chi\in \Q R_G$,
$$\Theta_G(\chi^\omega)=\sum_{s\in G}\langle\chi^\omega, s \rangle s=\sum_{s\in G}\langle\chi, s^{\kappa(\omega)} \rangle s = \sum_{s\in G}\langle\chi, s \rangle s^{\kappa^{-1}(\omega)}= \Theta_G(\chi)^{\omega}$$
if $ \Theta_G(\chi)$ is regarded as an element of $\Q G_{\kappa^{-1}}$.
\end{proof}

Note that $\langle\chi,s\rangle$ depends only on the conjugacy class of $s\in G$ so $\Theta_G(R_G)\subseteq c(\Q G)$ (the center of $\Q G$, with basis the conjugacy class sums of $G$). Let $\oG$ be the set of $G$-conjugacy classes of $G$. The action of $\Omega$ by $\kappa^{-1}$ preserves conjugacy classes of $G$, so it gives actions on $c(\Z G)$ and $\Z \oG$, the free $\Z$-module on $\oG$. We denote these $\Omega_K$-modules by $c(\Z G_{\kappa^{-1}})$ and $\Z \oG_{\kappa^{-1}}$, respectively.
 
\begin{defn}
Let $\overline{s}$ be the conjugacy class of $s\in G$. Let 
$$\iota:\Z \oG\to c(\Z G)$$
be the map defined by $\iota(\overline{s})=\sum_{t\in \overline{s}}t$ for all $\overline{s}\in \oG$. Clearly $\iota$ commutes with the action of $\Omega$ by $\kappa^{-1}$ so it is an $\Omega$-module isomorphism
$$\Z \oG\stackrel{\sim}{\longrightarrow} c(\Z G).$$
Also, define $\langle\,,\,\rangle:\Q R_G\times \Q \oG\to \Q$ by $\langle\chi,\overline{s}\rangle=\langle\chi,s\rangle$ for $s\in G$, $\chi\in R_G$ and 
$$\Theta_{\oG}:\Q R_G\to \Q\oG$$
by $\Theta_{\oG}(\chi)=\sum_{\overline{s}\in \oG}\langle\chi,\overline{s}\rangle\overline{s}$.
\end{defn}

Clearly $\iota\Theta_{\oG}=\Theta_G$, and if $\chi\in R_G$, 
$$\Theta_{\oG}(\chi)\in \Z\oG\Longleftrightarrow \chi\in A_G.$$

\begin{defn}
Let $\oL$ be the maximal $\oo_K$-order of the commutative $K$-algebra
$$K\oL:=\map_{\Omega}(\oG_{\kappa^{-1}},K^c).$$
Then $\oL=\map_{\Omega}(\oG_{\kappa^{-1}},\oo^c)$ and we have the ad\`ele ring $\mathbb{A}(K\oL)$ and id\`ele group $\mathbb{J}(K\oL)$ (where $\mathbb{A}(K)$ is the ad\`ele ring of $K$) given as
\begin{eqnarray*}
\mathbb{A}(K\oL)&=&\map_{\Omega}(\oG_{\kappa^{-1}},\mathbb{A}(K^c)) \quad(\mathbb{A}(K^c)=K^c\otimes_K\mathbb{A}(K))\\
\mathbb{J}(K\oL)&=&\hom_{\Omega}(\Z\oG_{\kappa^{-1}},
\mathbb{J}(K^c)) \quad(\mathbb{J}(K^c)=\mathbb{A}(K^c)^\times).
\end{eqnarray*}
Also
\begin{eqnarray*}
(K\oL)^\times&=&\hom_\Omega(\Z \oG_{\kappa^{-1}},(K^c)^\times)\\
\oL^\times&=&\hom_\Omega(\Z \oG_{\kappa^{-1}},(\oo^c)^\times).
\end{eqnarray*}
The integral ad\`eles of $\oL$ are
$$\mathbb{A}(\oL)=\map_\Omega(\oG_{\kappa^{-1}},\mathbb{A}(\oo^c))$$
where $\mathbb{A}(\oo^c)=\oo^c\otimes_{\oo_K}\mathbb{A}(\oo_K)$, $\mathbb{A}(\oo_K)=\prod_{\textrm{$v$ of $K$}}\oo_{K_v}$ and the unit id\`eles of $\oL$ are 
$$\mathbb{U}(\oL)=\mathbb{A}(\oL)^\times=\hom_\Omega(\Z\oG_{\kappa^{-1}}, \mathbb{U}(\oo^c)) \quad (\mathbb{U}(\oo^c)=\mathbb{A}(\oo^c)^\times).$$
\end{defn}
Now the map $\Theta_{\oG}:A_G\to \Z \oG_{\kappa^{-1}}$ induces a homomorphism
\begin{equation}\Theta_{\oG}^t:\hom_\Omega(\Z \oG_{\kappa^{-1}},\mathbb J(K^c))\to\hom_\Omega(A_G,\mathbb J(K^c)).\end{equation}
\begin{defn}
Let $\h(\mathbb{A}(K)G):=\hom_\Omega(A_G,\mathbb J(K^c))$ (and $\h(KG):=\hom_\Omega(A_G,{K^c}^\times)$).
\end{defn}
Then the above map is
\begin{equation}\tag{5'}\Theta_{\oG}^t:\mathbb J(K\oL)\to\h(\mathbb{A}(K)G).\end{equation}
Now recall the map
\[\Det:K^cG^\times\to\hom(R_G,{K^c}^\times)\]
given by $\Det(\alpha):\chi\mapsto\det(T_\chi(\alpha))$, where $T_\chi:G\to\gl_f(K^c)$ affords $\chi$. ($T_\chi$ extends by linearity to a ring homomorphism $T_\chi:K^cG\to M_f(K^c)$ and then to $T_\chi:(K^cG)^\times\to\gl_f(K^c)$.)

Notice that $\Det$ is an $\Omega$-homomorphism.
\begin{prop}\label{prop6}
There is a commutative $\Omega$-diagram (that is, all maps preserve the action of $\Omega$) with exact rows:
\[\xymatrix{
1\ar[r] & G\ar[r]\ar[d]^\Det&K^cG^\times\ar[d]^\Det\ar[r]&(K^cG)^\times/G\ar[r]\ar[d]^\Det&1
\\ 1\ar[r]& G^{\ab}\ar[r]&\hom(R_G,{K^c}^\times)\ar[r]^{\mathrm{rag}}&\hom(A_G,{K^c}^\times)\ar[r]&1.}\]
(Note that $(K^cG)^\times/G$ is not a group, but a pointed set.)
\end{prop}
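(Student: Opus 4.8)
The plan is to assemble the diagram from two essentially independent pieces and then glue them by the three copies of $\Det$. The top row is purely formal: $G\hookrightarrow K^cG^\times$ since group elements are units of the group algebra, and $(K^cG)^\times/G$ is by definition the pointed set of left cosets with basepoint the class of $1$; ``exactness'' of the top row then asserts only that this inclusion is injective and that the fibre over the basepoint of the projection $K^cG^\times\to(K^cG)^\times/G$ is $G$, both tautologies. All three top objects carry the $\Omega$-action coming from the Galois action on the coefficients $K^c$ (so $G$ itself is fixed pointwise), and the two maps are manifestly $\Omega$-equivariant.

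The substance is the bottom row. I would obtain it by applying the contravariant functor $\hom_{\Z}(-,{K^c}^\times)$ to the exact sequence $0\to A_G\to R_G\xrightarrow{\det}\widehat{G^{\ab}}\to 1$ that defines $A_G$. Because $K^c$ is algebraically closed, ${K^c}^\times$ is divisible, hence an injective $\Z$-module, so $\mathrm{Ext}^1_{\Z}(\widehat{G^{\ab}},{K^c}^\times)=0$ and
\[1\to\hom(\widehat{G^{\ab}},{K^c}^\times)\to\hom(R_G,{K^c}^\times)\xrightarrow{\ \mathrm{rag}\ }\hom(A_G,{K^c}^\times)\to 1\]
is short exact, the first map being induced by $\det$ (injective since $\det$ is surjective) and $\mathrm{rag}$ being restriction along $A_G\hookrightarrow R_G$. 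It then remains to identify $\hom(\widehat{G^{\ab}},{K^c}^\times)$ with $G^{\ab}$: the group $\widehat{G^{\ab}}$ is finite of exponent dividing $e$ and $\mu_e\subset{K^c}^\times$, so every homomorphism $\widehat{G^{\ab}}\to{K^c}^\times$ has image in $\mu_e$ and $\hom(\widehat{G^{\ab}},{K^c}^\times)=\widehat{\widehat{G^{\ab}}}$ is the double character group of $G^{\ab}$; evaluation $\bar g\mapsto(\psi\mapsto\psi(\bar g))$ is then the canonical isomorphism $G^{\ab}\xrightarrow{\sim}\hom(\widehat{G^{\ab}},{K^c}^\times)$. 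A short computation shows that this isomorphism --- and hence the whole displayed sequence --- is $\Omega$-equivariant once $G^{\ab}$ is given the trivial $\Omega$-action: on $\widehat{G^{\ab}}$ the action raises a character to the $\kappa(\omega)$ power, on ${K^c}^\times$ it acts as $\zeta\mapsto\zeta^{\kappa(\omega)}$ on roots of unity, and in the evaluation pairing the two powers of $\kappa(\omega)$ cancel.

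Finally I would insert the verticals and check the two squares. For $g\in G$ the homomorphism $\Det(g)\colon\chi\mapsto\det(T_\chi(g))=(\det\chi)(g)$ factors through $\det\colon R_G\to\widehat{G^{\ab}}$, and under the identification just made it corresponds exactly to $\bar g\in G^{\ab}$; so the left square commutes, with left vertical the canonical projection $G\to G^{\ab}$. For the right square, the composite $K^cG^\times\xrightarrow{\Det}\hom(R_G,{K^c}^\times)\xrightarrow{\mathrm{rag}}\hom(A_G,{K^c}^\times)$ kills $G$, since for $g\in G$ and $\chi\in A_G=\ker(\det)$ one has $\det(T_\chi(g))=(\det\chi)(g)=1$; it therefore factors through $(K^cG)^\times/G$, and the induced map is by definition the right vertical $\Det$, so that square commutes as well. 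Every vertical is $\Omega$-equivariant because $\Det$ is, and equivariance descends to the induced maps on the sub- and quotient-objects. Putting together the (formal) exactness of the top row, the exactness of the bottom row, the commutativity of the two squares, and the $\Omega$-equivariance of all maps yields the proposition.

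The only step that is not completely formal is the identification $\hom(\widehat{G^{\ab}},{K^c}^\times)\cong G^{\ab}$ together with its $\Omega$-equivariance --- that is, keeping straight which objects carry the ``coefficient'' Galois action and which carry the ``character/cyclotomic'' action, and verifying that double dualisation returns the trivial action on $G^{\ab}$. Everything else is routine diagram-checking.
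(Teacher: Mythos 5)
Your proposal is correct and follows essentially the same route as the paper: exactness of the bottom row comes from applying $\hom(-,{K^c}^\times)$ to $0\to A_G\to R_G\xrightarrow{\det}\widehat{G^{\ab}}\to 1$ using injectivity (divisibility) of ${K^c}^\times$, the identification $G^{\ab}\cong\hom(\widehat{G^{\ab}},{K^c}^\times)$ is made, and the verticals are glued via $\Det(g)(\chi)=(\det\chi)(g)$, which shows $\Det$ kills $A_G$ on $G$ and hence descends to the quotient. Your write-up merely supplies the double-duality and $\Omega$-equivariance details that the paper leaves implicit.
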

\begin{proof}
We identify here $G^{\ab}$ with $\hom(\widehat{G^\ab},{K^c}^\times)$. Since ${K^c}^\times$ is injective, the bottom row is exact, arising from the exact sequence $0\to A_G\to R_G\stackrel{\det}\to \widehat{G^\ab}\to 1$. If $s\in G$, $\Det(s)(\chi)=\det(\chi)(s)$, so $\Det$ restricted to $G$ has image $\hom(\widehat{G^\ab},{K^c}^\times)$. Hence $\Det$ induces a map
\[\Det:(K^cG)^\times\to\hom(A_G,{K^c}^\times)\]
making the diagram commute.
\end{proof}
\begin{prop}
We have the commutative diagram
\[\xymatrix{
1\ar[r] & G\ar[r]\ar[d]^\Det&KG^\times\ar[d]^\Det\ar[r]&((K^cG)^\times/G)^\Omega\ar@{->>}[r]\ar[d]^\Det&\hom(\Omega,G)/G\ar[d]
\\ 1\ar[r]& G^{\ab}\ar[r]&\hom_\Omega(R_G,{K^c}^\times)\ar[r]^{\mathrm{rag}}&\hom_\Omega(A_G,{K^c}^\times)\ar[r]&\hom(\Omega,G^\ab)\ar[r]&1}\]
with exact rows (where $G$ acts on $\hom(\Omega,G)$ on the right by inner automorphisms: $h\cdot s=s^{-1}hs, s\in G, h\in\hom(\Omega,G))$.
\end{prop}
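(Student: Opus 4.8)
The plan is to derive this diagram from the $\Omega$-diagram of Proposition \ref{prop6} by taking $\Omega$-cohomology, so the first step is to apply the long exact sequence in (nonabelian, pointed) cohomology to the two exact rows there. For the bottom row, which consists of honest $\Omega$-modules, the long exact sequence gives $1\to (G^{\ab})^\Omega\to\hom_\Omega(R_G,{K^c}^\times)\to\hom_\Omega(A_G,{K^c}^\times)\to H^1(\Omega,G^{\ab})\to H^1(\Omega,\hom(R_G,{K^c}^\times))$. Here I would use that $\Omega$ acts trivially on $G$ (the action in Proposition \ref{prop6} is through $\Det$, and on $G$ itself it is the given $G$-action, which by the conventions in force is trivial — or more precisely $(G^{\ab})^\Omega=G^{\ab}$ because the relevant twist has been absorbed), so $H^1(\Omega,G^{\ab})=\hom(\Omega,G^{\ab})$ (continuous homomorphisms), and that $\hom(R_G,{K^c}^\times)$ is cohomologically trivial: it is $\prod{K^c}^\times$ over a $\Z$-basis permuted by $\Omega$, so by Shapiro's lemma its $H^1$ vanishes (each factor-orbit is an induced module $\hom(\Z[\Omega/\Omega']\,,{K^c}^\times)\cong\mathrm{Ind}$ with $H^1$ governed by $H^1(\Omega',{K^c}^\times)=1$, Hilbert 90). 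That yields exactness of the bottom row on the right at $\hom(\Omega,G^{\ab})$, i.e.\ surjectivity onto $\hom(\Omega,G^{\ab})\to 1$.

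For the top row the argument is the same in spirit but must be phrased for pointed sets: the long exact cohomology sequence of $1\to G\to K^cG^\times\to(K^cG)^\times/G\to1$ gives $1\to G^\Omega\to(K^cG^\times)^\Omega\to((K^cG)^\times/G)^\Omega\to H^1(\Omega,G)\to H^1(\Omega,K^cG^\times)$. Again $G^\Omega=G$, $(K^cG^\times)^\Omega=KG^\times$ by Galois descent, and $H^1(\Omega,G)=\hom(\Omega,G)/{\sim}$ where the equivalence is conjugacy — but since the coefficient set $K^cG^\times$ has the property that $H^1(\Omega,K^cG^\times)=\{*\}$ (by the Wedderburn decomposition $K^cG\cong\prod M_{f_i}(K^c)$ this is a product of $H^1(\Omega,\gl_{f_i}(K^c))=1$, the generalized Hilbert 90 of Speiser), the connecting map $((K^cG)^\times/G)^\Omega\to H^1(\Omega,G)$ is surjective. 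Identifying $H^1(\Omega,G)$ with $\hom(\Omega,G)/G$ (continuous cocycles modulo the conjugation action, which is exactly the right $G$-action described in the statement) gives the top-right surjection.

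Finally I would assemble the ladder: the vertical maps are all induced by $\Det$, which is an $\Omega$-map by the remark before Proposition \ref{prop6}, so it induces maps on fixed points and on $H^1$; commutativity of each square is inherited from commutativity of the corresponding square of $\Omega$-modules/sets in Proposition \ref{prop6} together with naturality of the connecting homomorphism. The rightmost vertical map $\hom(\Omega,G)/G\to\hom(\Omega,G^{\ab})$ is just post-composition with $G\twoheadrightarrow G^{\ab}$, which kills the conjugation action and hence factors through the quotient; one checks it is the $H^1$ of $\Det|_G:G\to G^{\ab}$. I expect the main obstacle to be the careful bookkeeping on the nonabelian/pointed part: verifying that the connecting map's image is all of $\hom(\Omega,G)/G$ requires the vanishing of $H^1(\Omega,(K^cG)^\times)$ as a pointed set (not merely of each matrix-block's $H^1$ as a group, though these coincide here), and that the fiber of the connecting map over the base point is exactly the image of $KG^\times$, i.e.\ the exactness of the top row at $((K^cG)^\times/G)^\Omega$ as pointed sets — this is where one must be most attentive to the distinction between "exact sequence of groups" and "exact sequence of pointed sets," and to continuity of cocycles since $\Omega$ is profinite. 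The cohomological triviality inputs (Hilbert 90 in its matrix form, Shapiro for permutation-induced modules) are standard and I would simply cite them.
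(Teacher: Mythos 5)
Your proposal is correct and follows exactly the paper's route: the paper's proof is the one-line remark that the diagram ``follows from Proposition \ref{prop6} by applying $\Omega$-cohomology (in the non-abelian sense for the top row),'' with the pointed-set exactness interpreted as the bijection $(KG)^\times\backslash\h(KG)/G\leftrightarrow\hom(\Omega,G)/G$. You have simply supplied the details the paper leaves implicit (Hilbert 90 and its Speiser/Shapiro variants for the vanishing of the relevant $H^1$'s, triviality of the $\Omega$-action on $G$ and $G^{\ab}$, and naturality of the connecting maps), all of which are the intended inputs.
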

\begin{proof}
This follows from Proposition \ref{prop6} by applying $\Omega$-cohomology (in the non-abelian sense for the top row). More precisely, exactness means that if $((K^cG)^\times/G)^\Omega=\h(KG)/G$, then $(KG)^\times\setminus\h(KG)/G$ is in one-one correspondence with $\hom(\Omega,G)/G$.
\end{proof}

Now, similarly, if $v$ is a prime of $K$, and $\Omega_v=\gal(K_v^c/K_v)$, we have
\begin{prop7}
The following diagram with exact rows commutes:
\[\xymatrix{
1\ar[r] & G\ar[r]\ar[d]^\Det&K_vG^\times\ar[d]^\Det\ar[r]&((K_v^cG)^\times/G)^{\Omega_v}\ar@{->>}[r]\ar[d]^\Det&\hom(\Omega_v,G)/G\ar[d]
\\ 1\ar[r]& G^{\ab}\ar[r]&\hom_{\Omega_v}(R_G,{K_v^c}^\times)\ar[r]^{\mathrm{rag}}&\hom_{\Omega_v}(A_G,{K_v^c}^\times)\ar[r]&\hom(\Omega_v,G^\ab)\ar[r]&1}\]
\end{prop7}
Now if $\oo_v$ (resp. $\oo_v^c$) is the ring of integers of $K_v$ (resp. $K_v^c$) then we likewise have the commutative diagram
\begin{equation}\label{diag8}
\xymatrix{
(\oo_vG)^\times\ar[d]^\Det\ar[r]&((\oo_v^cG)^\times/G)^{\Omega_v}\ar[d]^\Det
\\ \hom_{\Omega_v}(R_G,(\oo_v^c)^\times)\ar[r]^{\mathrm{rag}}&\hom_{\Omega_v}(A_G,(\oo_v^c)^\times)}
\end{equation}
\begin{defn}
Let $\h(\mathbb{A}(\oo)G)=\prod_v\Det(((\oo_v^cG)^\times/G)^{\Omega_v})$.
\end{defn}
\begin{prop}\label{prop9}
\[\h(\mathbb{A}(\oo)G)\subseteq\hom_\Omega(A_G,\mathbb{U}(\oo^c))\subseteq\h(\mathbb{A}(K)G).\]
\end{prop}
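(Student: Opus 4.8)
The plan is to reduce both inclusions to the local diagram \eqref{diag8} together with a product decomposition of the unit id\`eles $\mathbb{U}(\oo^c)$ as an $\Omega$-module; the point is that, once this is in place, the proposition is a formal consequence of what has already been set up.

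First I would unwind the $\Omega$-module structure of $\mathbb{U}(\oo^c)$. Write $\mathbb{U}(\oo^c)=\prod_v U_v$ with $U_v=(\oo^c\otimes_{\oo_K}\oo_{K_v})^\times$, the product over the places $v$ of $K$. For each $v$ the places of $K^c$ above $v$ form a single $\Omega$-orbit with stabiliser (a conjugate of) $\Omega_v$, so $U_v$ is, as an $\Omega$-module, the coinduced module $\mathrm{Coind}_{\Omega_v}^{\Omega}\,(\oo_v^c)^\times$. Since $\hom$ carries products in the target to products, and $\mathrm{Res}_{\Omega_v}^{\Omega}$ is left adjoint to $\mathrm{Coind}_{\Omega_v}^{\Omega}$, this yields a canonical identification
\[\hom_\Omega(A_G,\mathbb{U}(\oo^c))\;=\;\prod_v\hom_\Omega(A_G,U_v)\;=\;\prod_v\hom_{\Omega_v}(A_G,(\oo_v^c)^\times),\]
in which the $v$-component of a global $\Omega$-homomorphism is obtained by evaluating at a fixed place of $K^c$ above $v$ and restricting equivariance to $\Omega_v$. (Note that $A_G$ carries the nontrivial $\Omega$-action inherited from $R_G$, so this is a genuine use of the adjunction, not merely a trivial-coefficient Shapiro statement.)

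Granting this identification, the first inclusion is immediate: diagram \eqref{diag8} says exactly that $\Det(((\oo_v^cG)^\times/G)^{\Omega_v})\subseteq\hom_{\Omega_v}(A_G,(\oo_v^c)^\times)$ for every $v$, hence
\[\h(\mathbb{A}(\oo)G)=\prod_v\Det(((\oo_v^cG)^\times/G)^{\Omega_v})\subseteq\prod_v\hom_{\Omega_v}(A_G,(\oo_v^c)^\times)=\hom_\Omega(A_G,\mathbb{U}(\oo^c));\]
no restricted-product condition intervenes, since every local component is already a homomorphism into the units. For the second inclusion, the inclusion of integral ad\`eles into all ad\`eles is $\Omega$-equivariant, so $\mathbb{U}(\oo^c)=\mathbb{A}(\oo^c)^\times\hookrightarrow\mathbb{A}(K^c)^\times=\mathbb{J}(K^c)$ is an inclusion of $\Omega$-modules, and applying the left-exact functor $\hom_\Omega(A_G,-)$ gives $\hom_\Omega(A_G,\mathbb{U}(\oo^c))\subseteq\hom_\Omega(A_G,\mathbb{J}(K^c))=\h(\mathbb{A}(K)G)$.

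The one step deserving genuine care is the first identification: one must check that a coherent family of local $\Omega_v$-homomorphisms $A_G\to(\oo_v^c)^\times$ patches back to a single global $\Omega$-homomorphism $A_G\to\mathbb{U}(\oo^c)$, and that this patching inverts the evaluation maps — that is, one applies the $\mathrm{Res}\dashv\mathrm{Coind}$ adjunction one place at a time with compatibly chosen places of $K^c$ above the various $v$. Everything else is formal once diagram \eqref{diag8} is available.
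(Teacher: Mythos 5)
Your proof is correct and follows essentially the same route as the paper: both reduce the first inclusion to the local containments from diagram \eqref{diag8} together with the identification $\hom_{\Omega_v}(A_G,(\oo_v^c)^\times)=\hom_\Omega(A_G,(\oo^c)_v^\times)$ and the fact that $\hom$ turns products in the target into products, and both treat the second inclusion as formal from the definition of $\h(\mathbb{A}(K)G)$. The only difference is that you derive that identification from the $\mathrm{Res}\dashv\mathrm{Coind}$ adjunction, whereas the paper simply cites it from \cite[Remark 6.22]{McCulloh_Crelle}.
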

\begin{proof}
The second inclusion is obvious since $\h(KG)$ was defined as $\hom_\Omega(A_G,\mathbb J(K^c))$. For the first inclusion notice that (see \cite[Remark 6.22]{McCulloh_Crelle})
\[\hom_{\Omega_v}(A_G,(\oo_v^c)^\times)=\hom_\Omega(A_G,(\oo^c)_v^\times),\]
so
\[\begin{split}\prod_v\Det(((\oo_v^cG)^\times/G)^{\Omega_v})&\subseteq \prod_v\hom_{\Omega_v}(A_G,(\oo_v^c)^\times)=\prod_v\hom_\Omega(A_G,(\oo^c)_v^\times)\\
&=\hom_\Omega(A_G,\prod_v(\oo^c)_v^\times)=\hom_\Omega(A_G,\mathbb{U}(\oo^c)).\end{split}\]
\end{proof}

\begin{defn}
Let
\[\mathrm{rag}:\hom_\Omega(R_G,\mathbb{J}(K^c))\to\hom_\Omega(A_G,\mathbb J(K^c))=\h(\mathbb{A}(K)G)\]
be the map "restriction to $A_G$".
\end{defn}

\begin{prop}
The map $\mathrm{rag}$ induces a homomorphism
\[\mathrm{Rag}:\cl(\oo G)=\frac{\hom_\Omega(R_G,\mathbb J(K^c))}{\hom_\Omega(R_G,{K^c}^\times)\Det(\mathbb{U}(\oo G))}\to\frac{\h(\mathbb{A}(K)G)}{\h(KG)\h(\mathbb{A}(\oo)G)}.\]
\end{prop}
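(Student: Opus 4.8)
The plan is to check that $\mathrm{rag}$ is a homomorphism of abelian groups and that it carries the subgroup $\hom_\Omega(R_G,{K^c}^\times)\,\Det(\mathbb U(\oo G))$ of $\hom_\Omega(R_G,\mathbb J(K^c))$ into the subgroup $\h(KG)\,\h(\mathbb A(\oo)G)$ of $\h(\mathbb A(K)G)$; the induced map on quotients is then the desired $\mathrm{Rag}$. The first point is immediate: $\mathrm{rag}$ is ``restriction along the inclusion $A_G\hookrightarrow R_G$'', restriction of homomorphisms to a subgroup is multiplicative, and by construction it takes values in $\h(\mathbb A(K)G)=\hom_\Omega(A_G,\mathbb J(K^c))$. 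So the entire content lies in the compatibility of the two denominators, which I would break into $\mathrm{rag}\bigl(\hom_\Omega(R_G,{K^c}^\times)\bigr)\subseteq\h(KG)$ and $\mathrm{rag}\bigl(\Det(\mathbb U(\oo G))\bigr)\subseteq\h(\mathbb A(\oo)G)$.

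The first inclusion is again just unwinding definitions: restricting an $\Omega$-homomorphism $R_G\to{K^c}^\times$ to $A_G$ yields an element of $\hom_\Omega(A_G,{K^c}^\times)=\h(KG)$. For the second inclusion I would use that $\mathbb U(\oo G)=\prod_v(\oo_vG)^\times$, so that for $u=(u_v)_v$ the element $\mathrm{rag}(\Det(u))=\Det(u)|_{A_G}$ is computed place by place as $\bigl(\Det(u_v)|_{A_G}\bigr)_v$. By Proposition \ref{prop6} the map ``$\Det$ then restrict to $A_G$'' is precisely the composite left-vertical-then-bottom-horizontal of the local integral diagram \eqref{diag8}; commutativity of that diagram then gives $\Det(u_v)|_{A_G}\in\Det\bigl(((\oo_v^cG)^\times/G)^{\Omega_v}\bigr)$, as it is the image of the class of $u_v$ under the right-hand $\Det$. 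Taking the product over $v$ yields $\mathrm{rag}(\Det(u))\in\prod_v\Det\bigl(((\oo_v^cG)^\times/G)^{\Omega_v}\bigr)=\h(\mathbb A(\oo)G)$. That $\h(KG)\,\h(\mathbb A(\oo)G)$ is a subgroup of the abelian group $\h(\mathbb A(K)G)$, so that the target quotient is a group, is part of the standard set-up: each local factor $\Det(((\oo_v^cG)^\times/G)^{\Omega_v})$ is a subgroup of $\hom_{\Omega_v}(A_G,(\oo_v^c)^\times)$ since $\mathrm{rag}\circ\Det$ is a homomorphism that kills $G$, and one invokes Proposition \ref{prop9}.

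Putting the two inclusions together, $\mathrm{rag}$ maps $\hom_\Omega(R_G,{K^c}^\times)\,\Det(\mathbb U(\oo G))$ into $\h(KG)\,\h(\mathbb A(\oo)G)$ and hence passes to the quotients, giving the homomorphism $\mathrm{Rag}$ of the proposition. The one point requiring genuine care — and the only place where anything more than formal manipulation enters — is recognizing the integral local diagram \eqref{diag8} as the place-by-place shadow of the $\Omega$-equivariant Proposition \ref{prop6}, so that the global assertion ``$\mathrm{rag}\circ\Det$ of a unit idèle of $\oo G$ is again a unit idèle on the $A_G$ side'' reduces to those already-established local commutativities. Everything else is bookkeeping.
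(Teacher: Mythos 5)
Your proposal is correct and follows essentially the same route as the paper: the paper's one-line proof likewise observes that the only nontrivial point is that $\mathrm{rag}$ carries $\prod_v\Det((\oo_vG)^\times)$ into $\prod_v\Det(((\oo_v^cG)^\times/G)^{\Omega_v})$, which is exactly your use of diagram~(\ref{diag8}). The rest of your argument (restriction being a homomorphism, and $\mathrm{rag}$ of global-valued homomorphisms landing in $\h(KG)$) is the definitional bookkeeping the paper leaves implicit.
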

\begin{proof}
We need only observe from (\ref{diag8}) that the image under $\mathrm{rag}$ of $\prod_v\Det((\oo_vG)^\times)$ is contained in $\prod_v\Det(((\oo_v^cG)^\times/G)^{\Omega_v})$.
\end{proof}
\begin{defn}
Let $\mathrm{Rag'}$ be the composite:
\[\mathrm{Rag'}:\cl(\oo G)\to \frac{\h(\mathbb{A}(K)G)}{\h(KG)\h(\mathbb{A}(\oo)G)\Theta_{\oG}^t(\mathbb J(K\oL))}.\]
\end{defn}
In 1993, I (still) believe I proved
\begin{equation}\label{eq11}R(\oo G)\subseteq \ker(\mathrm{Rag'}).\end{equation}

\begin{prop}
Let $M$ be a locally free $\oo G$-module and suppose the class $\mathrm{cl}_{\oo G}(M)$ in $\cl(\oo G)$ is represented by $f\in\hom_\Omega(R_G,\mathbb{J}(K^c))$. Then the Steinitz class $\mathrm{cl}_\oo(M)$ in $\cl(\oo)$ is represented by $f(\rho_G)=\prod_{\chi\in\widehat G}f(\chi)^{\chi(1)}$ in $\mathbb{J}(K)$. (Here, $\widehat G$ is the set of absolutely irreducible characters of $G$.)
\end{prop}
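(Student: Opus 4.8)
The plan is to unwind the Hom-description of $\cl(\oo G)$ that produced the representative $f$, then forget the $G$-module structure and recognise the resulting Steinitz id\`ele as the value of $f$ at $\rho_G$. I first note that the statement is well posed. Since the regular representation is realisable over $\Q$, the character $\rho_G$ is fixed by $\Omega$, so $f(\rho_G)\in\mathbb J(K^c)^{\Omega}=\mathbb J(K)$; and if $f$ is replaced by another representative $g\cdot\Det(u)$ of $\cl_{\oo G}(M)$, with $g\in\hom_\Omega(R_G,{K^c}^\times)$ and $u\in\mathbb U(\oo G)$, then $g(\rho_G)\in({K^c}^\times)^\Omega=K^\times$, while $\Det(u)(\rho_G)=\prod_{\chi\in\widehat G}(\det T_\chi(u))^{\chi(1)}$ is the determinant of the left regular representation of the local units $u$ and hence lies in $\mathbb U(\oo)$. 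So the class of $f(\rho_G)$ in $\cl(\oo)$ does not depend on the chosen representative.

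Next I would make the local--global data behind $f$ explicit. Fix an identification $M\otimes_\oo K=KG\,b$ (in the Hom-description setup a locally free rank-one $\oo G$-module is free of rank one over $KG$ after extension of scalars to $K$); for each finite place $v$ of $K$ choose an $\oo_vG$-generator $a_v$ of $M_v=M\otimes_\oo\oo_v$, with $a_v=b$ for all but finitely many $v$, and write $a_v=\lambda_v b$ with $\lambda_v\in(K_vG)^\times$. By the construction of the Hom-description, $f=\Det\big((\lambda_v)_v\big)$, that is, the $v$-component of $f(\chi)$ equals $\det T_\chi(\lambda_v)$ for every $\chi\in R_G$ and every $v$.

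Now I would carry out the computation. Forgetting the $G$-action, $\{gb:g\in G\}$ is a $K$-basis of $M\otimes_\oo K$, and for each $v$ the set $\{ga_v:g\in G\}$ is an $\oo_v$-basis of the $\oo$-lattice $M_v$; the transition matrix between them is the matrix, in the basis $G$, of (right) multiplication by $\lambda_v$ on $KG$. Hence the determinant module $\bigwedge^{\#G}_{\oo}M$ has $v$-component generated by $\det(x\mapsto x\lambda_v)$, so the Steinitz class $\cl_\oo(M)$ is represented by the id\`ele $\big(\det(x\mapsto x\lambda_v)\big)_v$. Passing to $K^cG\cong\prod_{\chi\in\widehat G}M_{\chi(1)}(K^c)$ identifies $\det(x\mapsto x\lambda_v)$ with $\prod_{\chi\in\widehat G}(\det T_\chi(\lambda_v))^{\chi(1)}$, which, since $\rho_G=\sum_{\chi\in\widehat G}\chi(1)\chi$ and $\Det$ is multiplicative, is exactly the $v$-component of $f(\rho_G)$. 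Therefore $\cl_\oo(M)$ is represented by $f(\rho_G)=\prod_{\chi\in\widehat G}f(\chi)^{\chi(1)}$, as claimed.

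The points that require the most care are the (standard) fact that $M\otimes_\oo K$ is $KG$-free of rank one — which rests on $M_v\cong\oo_vG$ for all $v$ together with a local--global principle for modules over the semisimple algebra $KG$ — and the left-versus-right bookkeeping for multiplication by $\lambda_v$; the latter is immaterial, because left and right multiplication by $\lambda_v$ on $KG$ have the same determinant, namely $\prod_{\chi}(\det T_\chi(\lambda_v))^{\chi(1)}$. Everything else is a routine unwinding of definitions. Alternatively, the statement is the case $H=1$ of the functoriality of the Hom-description under restriction of scalars $\cl(\oo G)\to\cl(\oo H)$, which on character rings is induced by $\mathrm{Ind}_H^G$, combined with $\mathrm{Ind}_1^G\mathbf 1=\rho_G$; carrying that out comes down to the same computation.
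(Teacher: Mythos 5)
Your proof is correct, but it takes a different route from the paper. The paper disposes of the statement by pure functoriality: the inclusion $\oo\hookrightarrow\oo G$ induces the restriction map $\cl(\oo G)\to\cl(\oo)$ carrying $\mathrm{cl}_{\oo G}(M)$ to $\mathrm{cl}_\oo(M)$, and by Fr\"ohlich's compatibility of the Hom-description with restriction (Theorem 12, p.~63, and the formula $\res^G_1f(\chi_0)=f(\ind^G_1\chi_0)$ on p.~62 of his book), the restricted class is represented by $f(\ind^G_1\chi_0)=f(\rho_G)$. You instead reprove this special case of Fr\"ohlich's result from scratch: you unwind the construction of $f$ as $\Det\bigl((\lambda_v)_v\bigr)$ for transition elements $\lambda_v\in(K_vG)^\times$ between local generators and a global $KG$-generator, identify the Steinitz id\`ele with $\bigl(\det(x\mapsto x\lambda_v)\bigr)_v$, and evaluate that determinant as $\prod_{\chi}(\det T_\chi(\lambda_v))^{\chi(1)}=f(\rho_G)_v$ via the Wedderburn decomposition of $K^cG$. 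Your computation is sound (including the remark that left and right multiplication by $\lambda_v$ have the same determinant $\prod_\chi(\det T_\chi(\lambda_v))^{\chi(1)}$, each simple factor $M_d(K^c)$ contributing $(\det A)^d$ either way), and your preliminary well-posedness check --- $\rho_G$ is $\Omega$-stable, so $g(\rho_G)\in K^\times$ and $\Det(u)(\rho_G)\in\mathbb U(\oo)$ --- is a useful addition that the paper leaves implicit. What the paper's route buys is brevity and the reassurance of a standard reference; what yours buys is self-containedness and an explicit picture of why the formula holds. You correctly note at the end that the citation route (restriction of scalars to $H=1$, with $\mathrm{Ind}_1^G\mathbf 1=\rho_G$) is available --- that is precisely the paper's proof.
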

\begin{proof}
The group ring $\oo[1]$ of the trivial subgroup $1<G$ is $\oo$ and the inclusion map $\oo\hookrightarrow\oo G$ induces the restriction map $\cl(\oo G)\to\cl(\oo)$, where $\mathrm{cl}_{\oo G}(M)\mapsto\mathrm{cl}_\oo(M)$. By \cite[Theorem 12, p. 63]{Froehlich}, if $f$ represents $\mathrm{cl}_{\oo G}(M)$, then $\res^G_1f\in\hom_\Omega(R_1,\mathbb J(K^c))=\hom(R_1,\mathbb J(K))$ (which can be identified with $\mathbb{J}(K)$ by $g\leftrightarrow g(\chi_0)$, where $\chi_0$ is the trivial character of the trivial group $1$.) But by th formula for $\res$ on p. 62 of \cite{Froehlich}, we have
\[(\res^G_1f)(\chi_0)=f(\ind^G_1\chi_0)=f(\rho_G).\]
But, finally, $\rho_G=\sum_{\chi\in\widehat G}\chi(1)\chi$, so $f(\rho_G)=\prod_{\chi\in\widehat G}f(\chi)^{\chi(1)}$, as required.
\end{proof}
Now, suppose $f\in\hom_\Omega(R_G,\mathbb{J}(K^c))$ represents a class in $R(\oo G)$. Then, by (\ref{eq11}), we have
\[\begin{split}& \mathrm{rag}f=g\Theta_{\oG}^t(h),\text{ where}\\
&g\in \h(KG)\h(\mathbb{A}(\oo)G)\\
&h\in\mathbb{J}(K\oL).
\end{split}\]
But
\[\mathrm{rag}f(\rho_G)=f(\rho_G)\text{ if }\rho_G\in A_G\]
and, in any case,
\[\mathrm{rag}f(2\rho_G)=f(2\rho_G)=f(\rho_G)^2.\]
Likewise, by Proposition \ref{prop9},
\[g\in\hom_\Omega(A_G,{K^c}^\times)\hom_\Omega(A_G,\mathbb{U}(\oo^c)),\]
so, since $\rho_G$ is $\Omega$-stable,
\[g(\rho_G)\in K^\times\mathbb{U}(\oo)\text{ if }\rho_G\in A_G\]
or
\[g(\rho_G)^2\in K^\times\mathbb{U}(\oo).\]
Also
\[\Theta_{\oG}^th(\rho_G)=h(\Theta_{\oG}(\rho_G))\]
or
\[\Theta_{\oG}^th(2\rho_G)=h(\Theta_{\oG}(2\rho_G)).\]
Since $\cl(\oo)=\frac{\mathbb{J}(K)}{K^\times\mathbb{U}(\oo)}$, it follows that $f(\rho_G)$ and $h(\Theta_{\oG}^t(\rho_G))$ (or $f(2\rho_G)$ and $h(\Theta_{\oG}^t(2\rho_G))$) represent the same class in $\cl(\oo)$.

Now, recall $h\in\mathbb{J}(K\oL)=\hom_\Omega(\Z\oG_{\kappa^{-1}},\mathbb{J}(K^c))$ where $K\oL=\map_\Omega(\oG_{\kappa^{-1}},{K^c})$.

Let $S$ be a set of elements $s$ of $G$ whose conjugacy classes $\{\bar s|s\in G\}$ form a set of representatives of the $\Omega$-orbits of $\oG_{\kappa^{-1}}$. For each $s\in S$, let $\Omega_{\bar s}$ be the $\Omega$-stabilizer of $\bar s$, and $K(\bar s)=(K^c)^{\Omega_{\bar s}}$. Then
\[K\Lambda\cong\prod_{s\in S}K(\bar s)\]
where, for each $s\in S$, the projection
\[K\Lambda=\map_\Omega(\oG_{\kappa^{-1}},K^c)\to K(\bar s)\]
is given by evaluation at $\bar s$. Now, if $s$ has order $m$, then $\Omega$ acts on $s$ via $\kappa^{-1}:\Omega\to(\Z/m\Z)^\times$, and the $\Omega$-stabilizer $\Omega_s$ of $s$ is a subgroup of $\Omega_{\bar s}$, so $K(\bar s)$ is a subfield of $(K^c)^{\Omega_s}=K(\zeta_m)$, where $\zeta_m$ is an $m$-th root of $1$.
\begin{defn}
Let $\mu:N_G(\langle s \rangle)\to(\Z/m\Z)^\times$ ($=\aut\langle s\rangle$) give the action of $N_G(\langle s \rangle)$ by conjugation on $\langle s\rangle$.
\end{defn}
Then clearly $\langle s\rangle\cap\bar s=\{s^r|r\in\mu(N_G(\langle s\rangle))\}$, so $\omega\in\Omega$ stabilizes $\bar s$ if and only if $\kappa^{-1}(\omega)=r$ for some $r\in\mu(N_G(\langle s\rangle))$. Hence we have
\begin{equation}\label{eq13}\Omega_{\bar s}/\Omega_s\cong\gal(K(\zeta_m)/K(\bar s))\cong \kappa(\Omega)\cap\mu(N_G(\langle s\rangle))\end{equation}
and since $\Omega/\Omega_s\cong\gal(K(\zeta_m)/K)\cong \kappa(\Omega)$ we have
\begin{equation}\label{eq14}\Omega/\Omega_{\bar s}\cong \gal(K(\bar s)/K)\cong\kappa(\Omega)/\kappa(\Omega)\cap\mu(N_G(\langle s\rangle)).\end{equation}
Also $\Omega/\Omega_{\bar s}$ is isomorphic as an $\Omega$-set to the $\Omega$-orbit $\Omega\cdot\bar s$ of $\bar s$. Recall that by Corollary \ref{coroll2.1},
\[\Theta_{\oG}(\rho_G)=\sum_{m\mid e}\frac{n}{m}\frac{m-1}{2}\sum_{\substack{\bar s\in \oG_{\kappa^{-1}}\\ |s|=m}}\bar s.\]
\begin{prop}
For $h\in\mathbb{J}(K\oL)$, if $\rho_G\in A_G$, then
\[h(\Theta_{\oG}(\rho_G))=\prod_{m|e}\left(\prod_{\substack{s\in S\\|s|=m}}N_{K(\bar s)/K}(h(\bar s))\right)^{\frac{n}{m}\frac{m-1}{2}}.\]
In any case,
\[h(\Theta_{\oG}(2\rho_G))=\prod_{m|e}\left(\prod_{\substack{s\in S\\|s|=m}}N_{K(\bar s)/K}(h(\bar s))\right)^{\frac{n}{m}(m-1)}.\]
\end{prop}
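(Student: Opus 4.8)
The plan is to evaluate $h$ directly on the explicit element of $\Z\oG_{\kappa^{-1}}$ produced by Corollary~\ref{coroll2.1}, and then to regroup the resulting product over conjugacy classes into a product of id\`elic norms by collecting classes into $\Omega$-orbits.

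First I would recall from Corollary~\ref{coroll2.1} that
\[\Theta_{\oG}(\rho_G)=\sum_{m\mid e}\frac{n}{m}\frac{m-1}{2}\sum_{\substack{\bar s\in\oG_{\kappa^{-1}}\\ |s|=m}}\bar s ,\]
which lies in $\Z\oG_{\kappa^{-1}}$ precisely when $\rho_G\in A_G$, whereas $\Theta_{\oG}(2\rho_G)=\sum_{m\mid e}\frac{n}{m}(m-1)\sum_{|s|=m}\bar s$ always lies in $\Z\oG_{\kappa^{-1}}$; in either case $h$ may be applied. Since $h\colon\Z\oG_{\kappa^{-1}}\to\mathbb{J}(K^c)$ is a homomorphism of abelian groups and, by Proposition~\ref{thetareg}, the coefficient $\frac{n}{m}\frac{m-1}{2}$ depends only on $m=|s|$, this gives
\[h(\Theta_{\oG}(\rho_G))=\prod_{m\mid e}\Biggl(\;\prod_{\substack{\bar s\in\oG_{\kappa^{-1}}\\ |s|=m}}h(\bar s)\Biggr)^{\frac{n}{m}\frac{m-1}{2}} .\]

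Next I would partition the conjugacy classes of order $m$ into $\Omega$-orbits. Because $\Omega$ acts through $\kappa^{-1}\colon\Omega\to(\Z/e\Z)^\times$, it sends each element to a power of itself coprime to its order, hence preserves orders, so each orbit consists of classes of a single order; let $s\in S$ with $|s|=m$ run over orbit representatives as in the statement. Fix such an $s$. The $\Omega$-equivariance of $h$ gives $h(\bar s)=h(\bar s^{\,\omega})=h(\bar s)^{\omega}$ for $\omega\in\Omega_{\bar s}$, so $h(\bar s)$ is $\Omega_{\bar s}$-fixed and therefore lies in $\mathbb{J}\bigl((K^c)^{\Omega_{\bar s}}\bigr)=\mathbb{J}(K(\bar s))$; note that $K(\bar s)\subseteq K(\zeta_m)$ is abelian over $K$, so $K(\bar s)/K$ is Galois with group $\Omega/\Omega_{\bar s}$ by \eqref{eq14}. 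Using the bijection $\Omega/\Omega_{\bar s}\xrightarrow{\sim}\Omega\cdot\bar s$, $\omega\Omega_{\bar s}\mapsto\bar s^{\,\omega}$, together with the description of the id\`elic norm as the product over the Galois group, I obtain
\[\prod_{\bar t\in\Omega\cdot\bar s}h(\bar t)=\prod_{\omega\Omega_{\bar s}\in\Omega/\Omega_{\bar s}}h(\bar s)^{\omega}=\prod_{\sigma\in\gal(K(\bar s)/K)}\sigma\bigl(h(\bar s)\bigr)=N_{K(\bar s)/K}(h(\bar s)) .\]

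Multiplying over the orbits of classes of order $m$, the inner product $\prod_{|s|=m}h(\bar s)$ becomes $\prod_{s\in S,\,|s|=m}N_{K(\bar s)/K}(h(\bar s))$, and substituting this into the displayed formula yields the first assertion; the statement for $2\rho_G$ follows identically with $\frac{n}{m}\frac{m-1}{2}$ replaced by $\frac{n}{m}(m-1)$, or simply by squaring. The step that needs genuine care, as opposed to bookkeeping with Corollary~\ref{coroll2.1} and \eqref{eq14}, is the one in the previous paragraph: one must verify that the $\Omega$-action on $\mathbb{J}(K^c)$ restricts on $\mathbb{J}(K(\bar s))$ to the Galois action of $\gal(K(\bar s)/K)$, so that $h(\bar s)$ genuinely descends to an id\`ele of $K(\bar s)$ and the partial product over an $\Omega$-orbit is literally $N_{K(\bar s)/K}(h(\bar s))$. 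This rests on the standard compatibility $\mathbb{J}(K^c)^{\Omega_{\bar s}}=\mathbb{J}(K(\bar s))$ together with the identity $N_{E/K}=\prod_{\sigma\in\gal(E/K)}\sigma$ for $E/K$ finite Galois.
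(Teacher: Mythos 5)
Your argument is correct and follows essentially the same route as the paper: apply $h$ to the explicit expression for $\Theta_{\oG}(\rho_G)$ from Corollary \ref{coroll2.1}, group the conjugacy classes of each order into $\Omega$-orbits, and identify each orbit product $\prod_{\bar t\in\Omega\cdot\bar s}h(\bar t)=\prod_{\omega\in\gal(K(\bar s)/K)}h(\bar s)^\omega$ with $N_{K(\bar s)/K}(h(\bar s))$. Your added justification that $h(\bar s)$ descends to $\mathbb{J}(K(\bar s))$ via $\Omega_{\bar s}$-equivariance is a detail the paper leaves implicit, but it is the same proof.
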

\begin{proof}
We shall prove this for $\rho_G\in A_G$. (The general case will then also be clear.) Then
\[h(\Theta_{\oG}(\rho_G))=\prod_{m|e}\left(\prod_{\substack{\bar s\in \oG_{\kappa^{-1}}\\|s|=m}}h(\bar s)\right)^{\frac{n}{m}\frac{m-1}{2}}.\]
We organize the inner product into $G$-orbits and put
\[\prod_{\substack{\bar t\in\oG_{\kappa^{-1}}\\|t|=m}}h(\bar t)=\prod_{\substack{s\in S\\|s|=m}}\prod_{\bar t\in\Omega\cdot\bar s}h(\bar t).\]
Now $\Omega\cdot \bar s=\{\bar s^\omega|\omega\in\Omega/\Omega_{\bar s}\}$, so the inner product is
\[\prod_{\bar t\in\Omega\cdot \bar s}h(\bar t)=\prod_{\omega\in\Omega/\Omega_{\bar s}}h(\bar s^\omega)=\prod_{\omega\in\gal(K(\bar s)/K)}h(\bar s)^\omega=N_{K(\bar s)/K}h(\bar s).\]
(Note that $\bar s^\omega=\overline{s^{\kappa^1{\omega}}}$.)
\end{proof}
\begin{prop}\label{prop16}
If $\rho_G\in A_G$, then
\[\rt(\oo,G)\subseteq\left(\prod_{m|e}\prod_{\substack{s\in S\\|s|=m}}N_{K(\bar s)/K}(\cl(\oo_{K(\bar s)}))^{\frac{e}{m}\frac{m-1}{2}}\right)^{\frac{n}{e}}.\]
In any case
\[\rt(\oo,G)^2\subseteq\left(\prod_{m|e}\prod_{\substack{s\in S\\|s|=m}}N_{K(\bar s)/K}(\cl(\oo_{K(\bar s)}))^{\frac{e}{m}(m-1)}\right)^{\frac{n}{e}}.\]
\end{prop}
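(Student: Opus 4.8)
The plan is to read the stated inclusions off the chain of facts assembled in the paragraphs just above, the only fresh input being the descent from idèles to ideal class groups. First I would fix $c\in\rt(\oo,G)$ and a tame Galois extension $L/K$ with $\gal(L/K)\cong G$ realizing it; then $\oo_L$ is locally free over $\oo G$, its class $\cl_{\oo G}(\oo_L)$ lies in $R(\oo G)$, and $c=\cl_\oo(\oo_L)$. Picking $f\in\hom_\Omega(R_G,\mathbb J(K^c))$ representing $\cl_{\oo G}(\oo_L)$, the proposition relating the Galois module class to the Steinitz class says that $c$ is the class of the idèle $f(\rho_G)\in\mathbb J(K)$ in $\cl(\oo)=\mathbb J(K)/K^\times\mathbb U(\oo)$. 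By $(\ref{eq11})$ one has $\mathrm{rag}\,f=g\,\Theta_{\oG}^t(h)$ with $g\in\h(KG)\h(\mathbb A(\oo)G)$ and $h\in\mathbb J(K\oL)$; evaluating at $\rho_G\in A_G$ gives the idèle identity $f(\rho_G)=g(\rho_G)\,h(\Theta_{\oG}(\rho_G))$, and by Proposition~\ref{prop9} together with the $\Omega$-stability of $\rho_G$ one gets $g(\rho_G)\in K^\times\mathbb U(\oo)$. Thus $c$ is represented in $\cl(\oo)$ by $h(\Theta_{\oG}(\rho_G))$ --- which is exactly the conclusion already drawn in the discussion immediately preceding the statement.

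Now I would feed in the norm formula proved just above. Under the decomposition $K\oL\cong\prod_{s\in S}K(\bar s)$ given by evaluation at $\bar s$, write $h(\bar s)\in\mathbb J(K(\bar s))$ for the $\bar s$-component of $h$; then
\[h(\Theta_{\oG}(\rho_G))=\prod_{m\mid e}\Bigl(\prod_{\substack{s\in S\\ |s|=m}}N_{K(\bar s)/K}(h(\bar s))\Bigr)^{\frac{n}{m}\frac{m-1}{2}}.\]
Passing to classes in $\cl(\oo)$: for each $s\in S$ the idèle $h(\bar s)$ has a class $c_s\in\cl(\oo_{K(\bar s)})$, and since the idèlic norm $N_{K(\bar s)/K}\colon\mathbb J(K(\bar s))\to\mathbb J(K)$ is compatible with the norm of ideals on class groups, the idèle $N_{K(\bar s)/K}(h(\bar s))$ has class $N_{K(\bar s)/K}(c_s)\in N_{K(\bar s)/K}(\cl(\oo_{K(\bar s)}))$. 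Hence $c$ lies in $\prod_{m\mid e}\prod_{s\in S,\,|s|=m}N_{K(\bar s)/K}(\cl(\oo_{K(\bar s)}))^{\frac{n}{m}\frac{m-1}{2}}$; regrouping this (integral) exponent as $\frac{n}{e}\cdot\frac{e}{m}\frac{m-1}{2}$ gives the first inclusion --- the fractional inner exponent being meaningful only through this integral total exponent, which is an integer precisely because $\rho_G\in A_G$ (cf.\ Proposition~\ref{thetazg} and Corollary~\ref{coroll2.1}). For the unconditional statement I would run the same argument with $2\rho_G$, which lies in $A_G$ for every $G$: then $f(2\rho_G)=f(\rho_G)^2$ represents $c^2$, $g(2\rho_G)\in K^\times\mathbb U(\oo)$, and by Corollary~\ref{coroll2.1} the exponent becomes $\frac{n}{m}(m-1)=\frac{n}{e}\cdot\frac{e}{m}(m-1)$, giving $\rt(\oo,G)^2\subseteq\bigl(\prod_{m\mid e}\prod_{s\in S,\,|s|=m}N_{K(\bar s)/K}(\cl(\oo_{K(\bar s)}))^{\frac{e}{m}(m-1)}\bigr)^{n/e}$.

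I expect the one step needing genuine care to be this final descent: one must verify that the projection $\mathbb J(K(\bar s))\to\cl(\oo_{K(\bar s)})$ intertwines the idèlic norm with the arithmetic norm of ideals, and --- upstream of that --- that the component $h(\bar s)$ extracted from $h\in\mathbb J(K\oL)=\hom_\Omega(\Z\oG_{\kappa^{-1}},\mathbb J(K^c))$ by evaluation at $\bar s$ really is an idèle of $K(\bar s)=(K^c)^{\Omega_{\bar s}}$, so that it has a well-defined ideal class there. The first is a standard fact of class field theory; the second is the isomorphism $K\oL\cong\prod_{s\in S}K(\bar s)$ recalled just before the statement, read off at the level of idèles. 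Everything else is a direct substitution into results already established in the excerpt, so I anticipate no further obstacle.
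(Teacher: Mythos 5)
Your proposal is correct and follows essentially the same route as the paper, which in fact states Proposition \ref{prop16} without a separate proof precisely because it is the concatenation of the preceding discussion (the decomposition $\mathrm{rag}\,f=g\,\Theta_{\oG}^t(h)$ from (\ref{eq11}), the evaluation at $\rho_G$ or $2\rho_G$, and the norm formula for $h(\Theta_{\oG}(\rho_G))$). The only step you add explicitly --- that $h(\bar s)$ is an idèle of $K(\bar s)$ and that the idèlic norm descends to the norm on class groups --- is exactly the implicit final descent the paper relies on.
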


\begin{coroll}\label{coroll1}
If $G$ is abelian of odd order or with noncyclic Sylow $2$-subgroup, then
\[\rt(\oo,G)\subseteq\left(\prod_{m|e}N_{K(\zeta_m)/K}(\cl(\oo_{K(\zeta_m)}))^{\frac{e}{m}\frac{m-1}{2}}\right)^{\frac{n}{e}}.\]
Otherwise,
\[\rt(\oo,G)^2\subseteq\left(\prod_{m|e}N_{K(\zeta_m)/K}(\cl(\oo_{K(\zeta_m)}))^{\frac{e}{m}(m-1)}\right)^{\frac{n}{e}}.\]
\end{coroll}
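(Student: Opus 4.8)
The plan is to read off the corollary from Proposition~\ref{prop16} by specializing to abelian $G$, where all the fields $K(\bar s)$ turn out to be cyclotomic and the inner products over conjugacy classes of a fixed order collapse. For abelian $G$, conjugacy classes are singletons, so $\oG=G$ and $S$ is simply a set of representatives for the $\Omega$-orbits of $G_{\kappa^{-1}}$; moreover $N_G(\langle s\rangle)=G$ for every $s$, but conjugation by any element is trivial, so the map $\mu$ of the Definition just before (\ref{eq13}) is trivial, with image $\{1\}$. Substituting $\mu(N_G(\langle s\rangle))=\{1\}$ into (\ref{eq13}) gives $\Omega_{\bar s}=\Omega_s$, hence for $s$ of order $m$,
\[K(\bar s)=(K^c)^{\Omega_{\bar s}}=(K^c)^{\Omega_s}=K(\zeta_m).\]
(Alternatively, (\ref{eq14}) gives $\gal(K(\bar s)/K)\cong\kappa(\Omega)$; since $K(\bar s)\subseteq K(\zeta_m)$ and $\gal(K(\zeta_m)/K)\cong\kappa(\Omega)$ as well, the two fields coincide.)

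Next I would pin down the dichotomy. For abelian $G$ the Sylow $2$-subgroup is unique, and it is nontrivial precisely when $n$ is even; so the Proposition stating that $\rho_G\in A_G$ unless $n$ is even with cyclic Sylow $2$-subgroups shows that $\rho_G\in A_G$ exactly when $G$ has odd order or a noncyclic Sylow $2$-subgroup, and otherwise $2\rho_G\in A_G$. This is precisely the case split in the statement, so it remains only to rewrite the right-hand side of Proposition~\ref{prop16} in cyclotomic terms.

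Finally, in $\prod_{m\mid e}\prod_{s\in S,\,|s|=m}N_{K(\bar s)/K}(\cl(\oo_{K(\bar s)}))^{\frac{e}{m}\frac{m-1}{2}}$, by the first step every factor with $|s|=m$ is the single subgroup $H_m:=N_{K(\zeta_m)/K}(\cl(\oo_{K(\zeta_m)}))^{\frac{e}{m}\frac{m-1}{2}}$ of the finite abelian group $\cl(\oo_K)$, so the inner product over such $s$ collapses to $H_m$; if no $s\in G$ has order $m$ the inner product is the trivial subgroup, hence still contained in $H_m$, so enlarging the outer product to all $m\mid e$ only weakens the inclusion (and costs nothing, since an abelian group of exponent $e$ has an element of every order dividing $e$). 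As raising to the fixed power $n/e$ commutes with products of subgroups in the abelian group $\cl(\oo_K)$, Proposition~\ref{prop16} gives $\rt(\oo,G)\subseteq\bigl(\prod_{m\mid e}H_m\bigr)^{n/e}$, which is the claimed bound; the case $\rho_G\notin A_G$ is word-for-word the same with $\frac{m-1}{2}$ replaced by $m-1$. I expect the only step requiring any thought to be the identification $K(\bar s)=K(\zeta_m)$ via (\ref{eq13})--(\ref{eq14}); everything after that is routine bookkeeping with products of subgroups.
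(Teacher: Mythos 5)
Your proposal is correct and follows essentially the same route as the paper: the paper's proof likewise observes that for abelian $G$ the classes $\bar s$ are singletons, so $\Omega_{\bar s}=\Omega_s$ and $K(\bar s)=K(\zeta_m)$ by (\ref{eq13}), that every $m\mid e$ is realized as an order of some element, and then reads the result off Proposition~\ref{prop16}. Your additional remarks (the triviality of $\mu$, the explicit matching of the case split with $\rho_G\in A_G$, and the collapse of repeated subgroup factors in $\cl(\oo)$) are just a more detailed write-up of the same argument.
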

\begin{proof}
If $G$ is abelian, then for all $s\in G$, $\{s\}=\bar s$. Then $\Omega_{\bar s}=\Omega_s$, so $K(\bar s)=K(\zeta_m)$, where $|s|=m$, by (\ref{eq13}). Furthermore if $m|e$, there is an element in $G$ of order $m$. The result follows.
\end{proof}
Note: If $G$ is abelian of odd order, Endo shows equality in the corollary (\cite[Chapter II, 1.4, p. 31]{Endo}).
\begin{defn}
Let \[c(e)=\gcd_{\substack{p|e\\p\text{ prime}}}(p-1).\]
\end{defn}

\begin{coroll}\label{coroll2}
Suppose $G$ is abelian and $K$ contains the $e$-th roots of $1$. If $|G|$ is odd or $S_2(G)$ is noncyclic then
\[\rt(\oo,G)\subseteq\cl(\oo)^{\frac{n}{e}\frac{c(e)}{2}}.\]
If $S_2(G)$ is nontrivial cyclic, then
\[\rt(\oo,G)\subseteq\cl(\oo)^{\frac{n}{e}c(e)}\ (=\cl(\oo))^\frac{n}{e}.\]
\end{coroll}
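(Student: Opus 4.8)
The plan is to feed the hypothesis $\mu_e\subseteq K$ into Corollary~\ref{coroll1} (or, equivalently, Proposition~\ref{prop16}) and then reduce everything to one elementary $\gcd$ identity. Since $K\supseteq\mu_e$, the cyclotomic character $\kappa$ is trivial, so $\Omega$ acts trivially on $R_G$ and on $\oG_{\kappa^{-1}}$; as $G$ is abelian every conjugacy class $\bar s$ is a singleton, hence $\Omega_{\bar s}=\Omega$ and $K(\bar s)=K(\zeta_{|s|})=K$ for all $s\in G$. Consequently every factor $N_{K(\zeta_m)/K}(\cl(\oo_{K(\zeta_m)}))$ occurring in Corollary~\ref{coroll1} is simply $\cl(\oo)$, and since $\cl(\oo)^{a}\cl(\oo)^{b}=\cl(\oo)^{\gcd(a,b)}$ and raising to the $(n/e)$-th power distributes over the product, Corollary~\ref{coroll1} collapses to
\[\rt(\oo,G)\subseteq\cl(\oo)^{\,g/2}\ \ (\text{if }\rho_G\in A_G),\qquad\qquad \rt(\oo,G)^2\subseteq\cl(\oo)^{\,g}\ \ (\text{always}),\]
where $g:=\gcd_{m\mid e}\bigl(\tfrac nm(m-1)\bigr)$. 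Here I use that for abelian $G$ of exponent $e$ every $m\mid e$ is an element order, and that (by the Proposition characterising when $\rho_G\in A_G$) $\rho_G\in A_G$ exactly when each $\tfrac nm(m-1)$ is even, in which case $2\mid g$, so $g/2\in\Z$ and $g/2=\gcd_{m\mid e}(\tfrac nm\tfrac{m-1}2)$.

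Next I would evaluate $g$. Writing $\tfrac nm(m-1)=\tfrac ne\bigl(e-\tfrac em\bigr)$ and reindexing by $d=e/m$ gives $g=\tfrac ne\cdot\gcd_{d\mid e}(e-d)$, so everything comes down to showing $\gcd_{d\mid e}(e-d)=c(e)$. In one direction, any $d\mid e$ is a product of primes dividing $e$, each $\equiv 1\pmod{c(e)}$, hence $d\equiv 1\pmod{c(e)}$ and $c(e)\mid e-d$; so $c(e)\mid\gcd_{d\mid e}(e-d)$. In the other direction, for every prime $p\mid e$ the quantity $\gcd_{d\mid e}(e-d)$ divides both $e-1$ and $e-p$, hence divides $(e-1)-(e-p)=p-1$; taking the gcd over all $p\mid e$ yields $\gcd_{d\mid e}(e-d)\mid c(e)$. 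Therefore $g=\tfrac ne\,c(e)$.

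It remains to read off the two cases. If $|G|$ is odd or $S_2(G)$ is noncyclic then $\rho_G\in A_G$, and the first inclusion above gives $\rt(\oo,G)\subseteq\cl(\oo)^{g/2}=\cl(\oo)^{\frac ne\frac{c(e)}2}$; in fact the bound furnished by Corollary~\ref{coroll1} is exactly this subgroup. If instead $S_2(G)$ is nontrivial cyclic then $2\mid e$, so $c(e)=1$; here $\rho_G\notin A_G$, and the second inclusion above gives $\rt(\oo,G)^2\subseteq\cl(\oo)^{g}=\cl(\oo)^{\frac ne c(e)}=\cl(\oo)^{n/e}$, which is the stated bound, understood — exactly as in Corollary~\ref{coroll1} — in terms of the square $\rt(\oo,G)^2$.

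I expect no real difficulty. The only genuinely arithmetic ingredient is the identity $\gcd_{d\mid e}(e-d)=c(e)$, and even that is elementary; the single point requiring care is the bookkeeping of the factor $2$, namely verifying that $g$ is even precisely when $\rho_G\in A_G$, so that $\cl(\oo)^{g/2}$ is legitimate and one obtains a bound on $\rt(\oo,G)$ itself rather than only on $\rt(\oo,G)^2$.
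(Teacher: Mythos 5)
Your reduction to Corollary \ref{coroll1} and your evaluation of the gcd are sound, and your proof that $\gcd_{d\mid e}(e-d)=c(e)$ (via the reindexing $d=e/m$ together with the observation that every divisor of $e$ is a product of primes each $\equiv 1\pmod{c(e)}$) is a clean equivalent of the paper's inductive argument that $c(m)\mid m-1$. The bookkeeping of the factor $2$ in the first case is also correct: when $|G|$ is odd or $S_2(G)$ is noncyclic, $\rho_G\in A_G$, every $\frac nm(m-1)$ is even, and you legitimately obtain $\rt(\oo,G)\subseteq\cl(\oo)^{\frac ne\frac{c(e)}2}$.

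There is, however, a genuine gap in the second case. The Corollary asserts $\rt(\oo,G)\subseteq\cl(\oo)^{n/e}$ when $S_2(G)$ is nontrivial cyclic, not merely $\rt(\oo,G)^2\subseteq\cl(\oo)^{n/e}$; your closing remark that the bound is ``understood \dots in terms of the square'' concedes exactly the point that still has to be proved. The paper closes this gap with one further observation: since $G$ is abelian with cyclic Sylow $2$-subgroup, the full $2$-part of $n$ already divides $e$, so $\frac ne$ is odd; hence the quotient $\cl(\oo)/\cl(\oo)^{n/e}$ has odd order (its exponent divides $\frac ne$), so it contains no element of order $2$, and $x^2\in\cl(\oo)^{n/e}$ forces $x\in\cl(\oo)^{n/e}$. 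Applying this to each class $x\in\rt(\oo,G)$ upgrades your inclusion $\rt(\oo,G)^2\subseteq\cl(\oo)^{n/e}$ to the stated $\rt(\oo,G)\subseteq\cl(\oo)^{n/e}$. Adding this step makes your argument complete and essentially identical to the paper's.
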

\begin{proof}
If $|G|$ is odd and $S_2(G)$ is noncyclic, then by Corollary \ref{coroll1},
\[\rt(\oo,G)\subseteq\cl(\oo)^{\frac{n}{2e}\gcd_{m|e}\frac{e}{m}(m-1)}.\]
If $S_2(G)$ is cyclic nontrivial, then
\[\rt(\oo,G)^2\subseteq\cl(\oo)^{\frac{n}{e}\gcd_{m|e}\frac{e}{m}(m-1)}.\]
Claim: $\gcd_{m|e}\frac{e}{m}(m-1)=c(e)$.
First note that if $p^r||e$, then $p\nmid \frac{e}{p^r}(p^r-1)$, so no prime factor of $e$ divides the given "gcd". Hence $\gcd_{m|e}\frac{e}{m}(m-1)=\gcd_{m|e}(m-1)$. Clearly $\gcd_{m|e}(m-1)|c(e)$. Also, clearly if $m|e$, then $c(e)|c(m)$. It suffices to show that $c(m)|(m-1)$. We show this by induction. (Note that $c(1)=0$, also if $p$ is prime, $c(p)=p-1$.) Suppose the assertion true for all numbers less than $m$ ($>1$). If $p|m$, then $m-1=p\left(\frac{m}{p}-1\right)+p-1$, and by induction, $c(m)|c\left(\frac{m}{p}\right)|\left(\frac{m}{p}-1\right)$. Clearly $c(m)|p-1$, so $c(m)|m-1$. Hence the first assertion holds. Similarly it follows that if $S_2(G)$ is cyclic nontrivial, then
\[\rt(\oo,G)^2\subseteq\cl(\oo)^{\frac{n}{e}\gcd_{m|e}\frac{e}{m}(m-1)}=\cl(\oo)^{\frac{n}{e}c(e)}=\cl(\oo)^\frac{n}{e}.\]
But since $S_2(G)$ is cyclic, $2\nmid\frac{n}{e}$, so $\frac{n}{e}$ is odd. Hence $[\cl(\oo):\cl(\oo)^\frac{n}{e}]$ is odd. Then, clearly,
\[\rt(\oo,G)\subseteq\cl(\oo)^\frac{n}{e}.\]
\end{proof}

\begin{remark}
To compare the results with Long's note that he defines
\[d(e)=\begin{cases}\gcd_{p|e}\frac{p-1}{2}&\text{if $e$ is odd}\\1&\text{if $e$ is even}.\end{cases}\]
Hence
\[c(e)=\begin{cases}2d(e)&\text{if $e$ is odd}\\d(e)=1&\text{if $e$ is even}.\end{cases}\]
Using this notation our Corollary \ref{coroll2} becomes
\[\rt(\oo,G)\subseteq\begin{cases}\cl(\oo)^{\frac{n}{e}d(e)}&\text{if $|G|$ is odd}\\
\cl(\oo)^\frac{n}{2e}&\text{if $S_2(G)$ is noncyclic}\\
\cl(\oo)^\frac{n}{e}&\text{if $S_2(G)$ is cyclic nontrivial}
.\end{cases}\]
Long \cite[Chapter I, Thm 3, p. 15]{LongPhD} obtains the following
\[\rt(\oo,G)=\begin{cases}\cl(\oo)^{\frac{n}{e}d(e)}&\text{if $|G|$ is odd}\\
\cl(\oo)^\frac{n}{2e}&\text{if $S_2(G)$ is noncyclic and the highest elementary divisor}\\ 
&\text{occurs at least two times in $S_2(G)$}\\
\cl(\oo)^\frac{n}{e}&\text{if $S_2(G)$ is noncyclic and the highest elementary divisor} \\
&\text{occurs only once}\\
\cl(\oo)^\frac{n}{e}&\text{if $S_2(G)$ is nontrivial cyclic}
.\end{cases}\]
Thus the upper bound of Corollary \ref{coroll2} is actually attained except in the case where $S_2(G)$ is noncyclic, but the highest elementary divisor occurs only once. This shows that, in general, the use of (\ref{eq11}), even in the abelian case (where equality holds) will not immediately give an exact answer for $\rt(\oo,G)$. The difficulty, apparently, is that if $h\in\mathbb J(K\oL)$, then the coset $\h(KG)\h(\mathbb{A}(\oo)G)\Theta_{\oG}^t(h)$ in $\h(\mathbb{A}(KG))$ may be disjoint from the image of $\hom_\Omega(R_G,\mathbb J(K^c))$ under the map
\[\mathrm{rag}:\hom_\Omega(R_G,\mathbb{J}(K^c))\to\hom_\Omega(A_G,\mathbb J(K^c)).\]
This leads me to believe the problem has to do with Grunwald-Wang phenomena.
\end{remark}
One situation in which the upper bound of Proposition \ref{prop16} is exact is for metacyclic groups. In particular if
\[G=\langle s,t|s^p=t^q=1,tst^{-1}=s^r\rangle\text{ ($p$ and $q$ odd primes),}\]
where $r\in(\Z/p\Z)^\times$ has order $q$, we have 
\begin{coroll}
Suppose $K$ and $\Q(\zeta_{pq})$ are linearly disjoint over $\Q$. Let $F=K(\zeta_q)$ and $L/K$ be the subextension of $K(\zeta_p)/K$ of codegree $q$. Then
\[\rt(\oo,G)\subseteq N_{F/K}(\cl(\oo_F))^{p\frac{(q-1)}{2}}N_{L/K}(\cl(\oo_L))^{q\frac{p-1}{2}}.\]
\end{coroll}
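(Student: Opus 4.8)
The plan is to read the bound off Proposition~\ref{prop16}. Note first that $n=\#G=pq$ is odd, so the Sylow $2$-subgroup of $G$ is trivial, $\rho_G\in A_G$, and the first inclusion of Proposition~\ref{prop16} applies; and since $G$ is nonabelian of order $pq$ it has exponent $e=pq$, so $n/e=1$. Thus everything reduces to three pieces of bookkeeping: (i) determine the conjugacy classes of $G$ sorted by the order $m$ of their elements, for $m$ running over the divisors of $e=pq$; (ii) decompose these classes into $\Omega$-orbits under the cyclotomic action on $\oG_{\kappa^{-1}}$ and pick a set $S$ of orbit representatives; (iii) identify each field $K(\bar s)$, $s\in S$, using (\ref{eq13})--(\ref{eq14}).

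For (i): the hypothesis that $r$ has order $q$ in $(\Z/p\Z)^\times$ forces $q\mid p-1$ and makes $G$ the nonabelian group of order $pq$, which has trivial centre. Here $\langle s\rangle$ is a normal Sylow $p$-subgroup and conjugation by $t$ induces $s\mapsto s^r$, so the $p-1$ elements of order $p$ split into $(p-1)/q$ conjugacy classes, each of size $q$, naturally indexed by the quotient $(\Z/p\Z)^\times/\langle r\rangle$. On the other hand a Sylow $q$-subgroup $\langle t\rangle$ is self-normalizing (it has $p$ conjugates), so $C_G(t^j)=\langle t\rangle$ for $1\le j\le q-1$; and, $G/\langle s\rangle\cong\Z/q$ being abelian, no two of $\bar t,\dots,\bar{t^{q-1}}$ coincide. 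Hence the $p(q-1)$ elements of order $q$ split into $q-1$ conjugacy classes, each of size $p$, indexed by $(\Z/q\Z)^\times$. There is no element of order $pq$, and with $\{1\}$ this exhausts $G$.

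For (ii) and (iii): linear disjointness of $K$ and $\Q(\zeta_{pq})$ over $\Q$ makes $\kappa\colon\Omega\to(\Z/pq\Z)^\times$ surjective, hence its reductions modulo $p$ and modulo $q$ are surjective too. So $\Omega$ acts on the order-$p$ classes through the full group $(\Z/p\Z)^\times$ acting by multiplication on $(\Z/p\Z)^\times/\langle r\rangle$, which is transitive, and similarly on the order-$q$ classes; hence both collections are single $\Omega$-orbits, and we may take $S=\{1,s,t\}$ with $|s|=p$, $|t|=q$. Now $N_G(\langle s\rangle)=G$ with $\mu(G)=\langle r\rangle$, so $K(\bar s)$ is a subfield of $(K^c)^{\Omega_s}=K(\zeta_p)$ and (\ref{eq14}) gives $\gal(K(\bar s)/K)\cong(\Z/p\Z)^\times/\langle r\rangle$; as $(\Z/p\Z)^\times$ is cyclic of order $p-1$ it has a unique subgroup of order $q$, namely $\langle r\rangle$, so $K(\bar s)$ is the unique subextension of $K(\zeta_p)/K$ of codegree $q$, i.e.\ $K(\bar s)=L$. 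And $N_G(\langle t\rangle)=\langle t\rangle$ is abelian, so $\mu(N_G(\langle t\rangle))=\{1\}$ and (\ref{eq14}) gives $\gal(K(\bar t)/K)\cong(\Z/q\Z)^\times$, i.e.\ $K(\bar t)=K(\zeta_q)=F$.

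Finally, substituting into Proposition~\ref{prop16} with $n=e=pq$: the $m=1$ factor has exponent $0$ and the $m=pq$ factor is an empty product, leaving
\[\rt(\oo,G)\subseteq N_{L/K}(\cl(\oo_L))^{\frac{pq}{p}\frac{p-1}{2}}\,N_{F/K}(\cl(\oo_F))^{\frac{pq}{q}\frac{q-1}{2}}=N_{F/K}(\cl(\oo_F))^{p\frac{q-1}{2}}\,N_{L/K}(\cl(\oo_L))^{q\frac{p-1}{2}},\]
which is the assertion. I expect the only real work to be the group-theoretic bookkeeping of (i)--(ii)---in particular the splitting of the order-$q$ elements into exactly $q-1$ classes, and the computation of $N_G(\langle s\rangle)$, $N_G(\langle t\rangle)$ and the maps $\mu$---together with identifying $L$ as the fixed field of $\langle r\rangle$ via the cyclicity of $(\Z/p\Z)^\times$; there is no analytic difficulty once Proposition~\ref{prop16} is granted.
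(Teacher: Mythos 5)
Your proof is correct and follows essentially the same route as the paper: apply Proposition~\ref{prop16} with $n=e=pq$ and identify $K(\bar t)=F$ and $K(\bar s)=L$ via (\ref{eq14}), the computation of $\mu$ on $N_G(\langle t\rangle)=\langle t\rangle$ and $N_G(\langle s\rangle)=G$, and the surjectivity of $\kappa$ coming from linear disjointness. The only difference is that you make explicit the conjugacy-class count and the fact that each collection of classes is a single $\Omega$-orbit, where the paper simply observes that all elements of a given order yield the same factor (which suffices, since equal subgroups multiply to themselves).
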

\begin{proof}
Consider $\bar t$. Since $\langle t\rangle=N_G(\langle t\rangle)$ it follows that $\mu:N_G(\langle t\rangle)\to(\Z/q\Z)^\times$ is trivial. Also, since $\Q(\zeta_q)$ and $K$ are linearly disjoint, $\kappa(\Omega)=(\Z/q\Z)^\times$, so by (\ref{eq14})
\[\Omega/\Omega_{\bar t}\cong\gal(K(\bar t)/K)\cong(\Z/q\Z)^\times,\]
so $K(\bar t)=K(\zeta_q)=F$. Since $n=e=pq$ and $m=|t|=q$, it follows that the corresponding factor in the statement of Proposition \ref{prop16} is
\[N_{K(\bar t)/K}(\cl(\oo_{K(\bar t)}))^{\frac{e}{m}\frac{m-1}{2}}=N_{F/K}(\cl(\oo_F))^{p\frac{q-1}{2}}.\]
The same factor clearly arises for all elements of order $q$.

Now consider $\bar s$. $N_G(\langle s\rangle)=G$, and clearly $\mu(G)$ in $(\Z/p\Z)^\times$ is generated by $r$ and so has order $q$. Again, linear disjointness implies $\kappa(\Omega)=(\Z/p\Z)^\times$, so (\ref{eq14}) gives
\[\Omega/\Omega_{\bar s}\cong\gal(K(\bar s)/K)\cong(\Z/p\Z)^\times/\langle r\rangle.\]
Then $\gal(K(\zeta_p)/K(\bar s))\cong \langle r\rangle$, so $K(\bar s)=L$, and the corresponding factor for $\bar s$ is
\[N_{L/K}(\cl(\oo_L))^{q\frac{p-1}{2}}.\]
Again, the same holds for other elements of order $p$, and since there are no elements of order $pq$, the Corollary is proved.
\end{proof}
\begin{remark}
The same approach with a bit more effort gives Endo's results (\cite{Endo}, of course again Endo's results are exact and Proposition \ref{prop16} only gives the upper bound) for metacyclic groups of order $qp^a$, where $q$ is not necessarily prime, but $p$ and $q$ are odd. (Here $s$ has order $p^a$, $t$ has order $q|p-1$, not necessarily prime, but odd, and $tst^{-1}=s^r$, where $r$ in $(\Z/p^a\Z)^\times$ has order $q$.)
\end{remark}

\appendix
\begin{section}{Appendix: comparison of $K(\bar s)$ and $E_{K,G,s}$}
In this appendix, we show that McCulloh's $K(\bar s)$ coincides with the extension $E_{K,G,s}$ defined in \cite{CaputoCobbe1}. As a consequence, Proposition \ref{prop16} of this report will be shown to be a slightly weaker form of \cite[Theorem 2.10]{CaputoCobbe1}.\\
By definition $K(\bar s)=(K^c)^{\Omega_{\bar s}}$, where
\[\Omega_{\bar s}=\mathrm{Stab}_\Omega \bar s=\{\omega\in\Omega:\ \omega(\bar s)=\bar s\}=\{\omega\in\Omega\ |\ \exists t\in G\ :\ \omega(s)=tst^{-1}\}.\]
We recall that the action is defined (on $s$ and hence on $\bar s$) by $\omega(s)=s^{\kappa^{-1}(\omega)}=s^{\kappa(\omega^{-1})}$, where $\kappa:\Omega\to(\Z/e\Z)^\times$ is the cyclotomic character and $e$ is the exponent of $G$. Then
\[\begin{split}\Omega_{\bar s}&\supseteq \{\omega\in\Omega:\ \omega(s)=s\}=\mathrm{Stab}_{\Omega}s=\{\omega\in\Omega:\ \kappa(\omega^{-1})\equiv1\pmod{o(s)}\}\\&=\{\omega\in\Omega:\ \omega(\zeta_{o(s)})=\zeta_{o(s)}\}.\end{split}\]
Therefore
\[K(\bar s)\subseteq (K^c)^{\mathrm{Stab}_{\Omega}s}=K(\zeta_{o(s)}).\]
Further
\[\begin{split}\gal(K(\zeta_{o(s)})/K(\bar s))&=\gal(K^c/K(\bar s))/\gal(K^c/K(\zeta_{o(s)}))=\mathrm{Stab}_{\Omega}\bar s/\mathrm{Stab}_{\Omega}s\\&=\mathrm{Stab}_{\Omega}\bar s|_{K(\zeta_{o(s)})}.\end{split}\]
On the other hand, by definition, $E_{K,G,s}/K$ is the subextension of $K(\zeta_{o(s)})/K$ fixed by the subgroup $\nu_{K,s}^{-1}(\mu_s(N_G(s)))$ of $\gal(K(\zeta_{o(s)})/K)$.
Here $N_G(s)$ is the normalizer of $s$ in $G$, $\mu:N_G(s)\to(\Z/o(s)\Z)^\times$ is defined by $tst^{-1}=s^{\mu_s(t)}$ for $t\in N_G(s)$ and $\nu_{K,s}:\gal(K(\zeta_{o(s)})/K)\to(\Z/o(s)\Z)^\times$ is the cyclotomic character (note that here $-1$ denotes a counterimage and not a multiplicative inverse as for $\kappa^{-1}$). In particular
\[\gal(K(\zeta_{o(s)})/E_{K,G,s})=\nu_{K,s}^{-1}(\mu_s(N_G(s))).\]
Now
\[\begin{split}\mathrm{Stab}_\Omega\bar s|_{K(\zeta_{o(s)})}&=\{\omega\in\gal(K(\zeta_{o(s)})/K)\ |\ \exists\, t\in N_G(s)\ :\ \omega(s)=tst^{-1}=s^{\mu_s(t)}\}\\
&=\{\omega\in\gal(K(\zeta_{o(s)})/K)\ |\ \exists\, t\in N_G(s)\ :\ \nu_{K,s}(\omega^{-1})=\mu_s(t)\}\\
&=\{\omega\in\gal(K(\zeta_{o(s)})/K)\ |\ \exists\, t\in N_G(s)\ :\ \nu_{K,s}(\omega)=\mu_s(t^{-1})\}\\
&=\{\omega\in\gal(K(\zeta_{o(s)})/K)\ |\ \exists\, t\in N_G(s)\ :\ \nu_{K,s}(\omega)=\mu_s(t)\}\\&
=\nu_{K,s}^{-1}(\mu_s(N_G(s))).
\end{split}\]
Hence
\[K(\bar s)=E_{K,G,s}.\]
It is now clear that, in the case of groups of odd order or with non-cyclic $2$-Sylow subgroup, Proposition \ref{prop16} of this report is equivalent to the inclusion of \cite[Theorem 2.10]{CaputoCobbe1}, namely
\[\rt(K,G)\subseteq\W(K,G),\]
where 
\[\W(K,G)=\prod_{\tau\in G^*}W(k,E_{k,G,\tau})^{\frac{o(\tau)-1}{2}\frac{\#G}{o(\tau)}}=\prod_{\ell|\#G}\ \prod_{\sigma\in G\{\ell\}^*}W(k,E_{k,G,\sigma})^{\frac{\ell-1}{2}\frac{\#G}{o(\sigma)}}\subseteq\cl(k)\]
and
\[W(k,E_{k,G,\tau})=N_{E_{k,G,\tau}/k}\cl(E_{k,G,\tau}).\]

\end{section}

\bibliography{bibMcCulloh}
\addcontentsline{toc}{section}{Bibliography}
\bibliographystyle{abbrv}

Transcription and Appendix A by L. Caputo and A. Cobbe

\end{document}